\documentclass[11pt, reqno]{amsart}

\usepackage{graphicx}
\usepackage{epstopdf}
\usepackage{subcaption}
\usepackage[utf8]{inputenc}
\usepackage[T1]{fontenc}
\usepackage[english]{babel}
\usepackage{amsmath, amsthm}
\usepackage{ae}
\usepackage{icomma}
\usepackage{units}
\usepackage{color}
\usepackage{graphicx}
\usepackage{bbm}
\usepackage{caption}
\usepackage{array}
\usepackage[hmarginratio=1:1]{geometry}
\usepackage[hyphens]{url}
\usepackage[pdfpagelabels=false, hidelinks]{hyperref}

\usepackage{mathrsfs}
\usepackage{amssymb, amsfonts}
\usepackage{placeins} 
\usepackage{tikz}
\usetikzlibrary{patterns}
\usepackage{float}
\usepackage[nodayofweek]{datetime}
\usepackage{enumitem}
\usepackage{mathtools}
\usepackage[numbers, sort]{natbib}
\usepackage{dsfont}
\usepackage{csquotes}

\newcommand{\R}{\ensuremath{\mathbb{R}}}
\newcommand{\N}{\ensuremath{\mathbb{N}}}

\newcommand{\Z}{\ensuremath{\mathbb{Z}}}

\renewcommand{\S}{\ensuremath{\mathbb{S}}}
\newcommand{\Haus}{\ensuremath{\mathcal{H}}}

\newcommand{\1}{\ensuremath{\mathds{1}}}

\DeclareMathOperator{\Tr}{Tr}

\DeclareMathOperator{\supp}{supp}

\newtheorem{theorem}{Theorem}[section]

\newtheorem{lemma}[theorem]{Lemma}
\newtheorem{proposition}[theorem]{Proposition}

\numberwithin{theorem}{section}
\theoremstyle{remark}

\newcommand{\limplus}{{\mathchoice{\vcenter{\hbox{$\scriptstyle +$}}}
 {\vcenter{\hbox{$\scriptstyle +$}}}
 {\vcenter{\hbox{$\scriptscriptstyle +$}}}
 {\vcenter{\hbox{$\scriptscriptstyle +$}}}
}}
\newcommand{\limminus}{{\mathchoice{\vcenter{\hbox{$\scriptstyle -$}}}
 {\vcenter{\hbox{$\scriptstyle -$}}}
 {\vcenter{\hbox{$\scriptscriptstyle -$}}}
 {\vcenter{\hbox{$\scriptscriptstyle -$}}}
}}
\newcommand{\limpm}{{\mathchoice{\vcenter{\hbox{$\scriptstyle \pm$}}}
 {\vcenter{\hbox{$\scriptstyle \pm$}}}
 {\vcenter{\hbox{$\scriptscriptstyle \pm$}}}
 {\vcenter{\hbox{$\scriptscriptstyle \pm$}}}
}}

\begin{document}

\title[Robin counting functions in the critical scaling regime]{Robin counting functions on cuboids\\ in the critical scaling regime}

\author{Matthias Baur}
\address{\textnormal{(M. Baur)} Institute of Analysis, Dynamics and Modeling, Department of Mathematics, University of
Stuttgart, Pfaffenwaldring 57, 70569 Stuttgart, Germany.}
\email{\href{mailto:matthias.baur@mathematik.uni-stuttgart.de}{matthias.baur@mathematik.uni-stuttgart.de}}

\author{Simon Larson}
\address{\textnormal{(S. Larson)} Mathematical Sciences, Chalmers University of Technology and the University of Gothenburg, SE-412 96 G\"{o}teborg, Sweden. }
\email{\href{mailto:larsons@chalmers.se}{larsons@chalmers.se}}

\begin{abstract}
We consider eigenvalue counting functions of Robin Laplace operators on cuboids where the Robin parameter and the spectral cut-off $\lambda$ are coupled. Our main focus is the critical regime, in which the Robin parameter is proportional to $\sqrt{\lambda}$. We obtain a two-term asymptotic expansion for the counting functions in this coupled setting. In the critical regime, the second term in the asymptotic expansion depends non-trivially on the proportionality constant and interpolates continuously between the corresponding second terms for the Dirichlet and Neumann Laplacians. Outside the critical regime, one recovers the corresponding Dirichlet or Neumann asymptotics. We also establish a P\'olya-type inequality for the counting function whenever the ratio of the Robin parameter and $\sqrt{\lambda}$ exceeds a dimension-dependent threshold.
\end{abstract}

\maketitle


\section{Introduction and main results}

Let $\Omega \subset \R^d$ be a bounded open set with Lipschitz regular boundary. For $\beta\in \R$ we define the Robin Laplace operator
$-\Delta_\Omega^\beta$ on $L^2(\Omega)$ as the unique self-adjoint operator associated to the quadratic form 
$$
 u \mapsto\int_\Omega |\nabla u(x)|^2\, dx + \beta\int_{\partial \Omega} |u(x)|^2\, d\Haus^{d-1}(x)\, , \qquad u \in H^1(\Omega)\, , 
$$ 
see, e.g., \cite[Section~3.1]{Frank2022}. Since $\partial\Omega$ is Lipschitz, the quadratic form is lower semibounded and closed. The operator $-\Delta_\Omega^\beta$ has purely discrete spectrum, accumulating at infinity only. We denote its eigenvalues in non-decreasing order by $\{\lambda_k(-\Delta_\Omega^{\beta})\}_{k\geq 1}$, counting multiplicities, so that
\begin{equation*}
  \lambda_1(-\Delta_\Omega^\beta)\leq \lambda_2(-\Delta_\Omega^\beta)\leq \lambda_3(-\Delta_\Omega^\beta)\leq \ldots \to \infty\, .
\end{equation*}
We also introduce the Dirichlet and Neumann Laplace operators in $L^2(\Omega)$ and denote them by $-\Delta_\Omega^{\rm D}$ and $-\Delta_{\Omega}^{\rm N}$, respectively. The Neumann Laplace operator is obtained by taking $\beta = 0$ in the definition above while the Dirichlet Laplacian is defined through the quadratic form 
$$
 u \mapsto\int_\Omega |\nabla u(x)|^2\, dx\, , \qquad u \in H_0^1(\Omega)\, . 
$$

\subsection{Main results}

In this paper, we focus on the case $\beta>0$ and $\Omega \subset \mathbb{R}^d$, $d\geq 2$, belonging to the class of cuboids, i.e.\ sets of the form $\prod_{i=1}^d (0, l_i)$ with $l_1, \ldots, l_d >0$. We establish a two-term asymptotic expansion and a P\'olya-type inequality for the eigenvalue counting functions of Robin Laplace operators where the spectral cut-off $\lambda$ and the Robin parameter $\beta$ are coupled. Our main interest is in the critical scaling regime where $\beta \sim \sqrt{\lambda}$. Note that this regime is defined so that the length scale $\sim 1/\beta$ induced by the Robin condition remains comparable to the wavelength $\sim 1/\sqrt{\lambda}$ as $\lambda \to \infty$. Joint asymptotic regimes involving both the Robin parameter and the spectral parameter have recently been studied in several different settings. Asymptotics of Riesz means in the critical regime considered here were established in \cite{Frank2012, Baur2026}. Other coupled asymptotic regimes have been studied for negative Robin parameters; see, for instance, \cite{Kachmar2016, Helffer2017, Khalie2018, Helffer2022}.

To state our results, let us introduce some notation. For $d\geq 0, \gamma \geq 0$ we define the semiclassical constant by
\begin{equation*}
  L_{\gamma, d}^{\rm sc} := \frac{\Gamma(1+\gamma)}{(4\pi)^{d/2}\Gamma(1+\gamma+d/2)}\, , 
\end{equation*}
and for $\beta >0$ let
\begin{equation*}
  L_{\gamma, d}(\beta) := L_{\gamma, d}^{\rm sc}\biggl(\frac{4}{\pi}\int_0^1(1-s^2)^{\gamma+d/2}\frac{\beta}{\beta^2+s^2}\, ds -1\biggr)\, .
\end{equation*}
As is shown in \cite[Lemma 2.8]{Baur2026}, $\beta \mapsto L_{\gamma, d}(\beta)$ is continuous and strictly decreasing with
\begin{equation}\label{eq: limits of coefficient}
  \lim_{\beta \to 0^+}L_{\gamma, d}(\beta) = L_{\gamma, d}^{\rm sc} \quad \mbox{and} \quad \lim_{\beta \to \infty}L_{\gamma, d}(\beta) = -L_{\gamma, d}^{\rm sc}\, .
\end{equation}
For a self-adjoint operator $H$ whose spectrum consists of eigenvalues $\{\lambda_k(H)\}_{k\geq 1}$ we define the counting function by
\begin{align*}
  N(H, \lambda) := \#\{k:\lambda_k(H)\leq \lambda\}
\end{align*}
and for $\gamma >0, \lambda\in \R$ the Riesz mean of order $\gamma$ by
\begin{equation*}
  \Tr(H-\lambda)_\limminus^\gamma := \sum_{k\geq 1}(\lambda-\lambda_k(H))_\limplus^\gamma\, .
\end{equation*}
Here, $x_\limpm := (|x|\pm x)/2$. 

The first result of this paper is the following theorem which provides a two-term asymptotic expansion for the counting functions of Robin Laplacians in the critical scaling regime with explicit control of the remainder.

\begin{theorem}\label{thm: sc two-term Weyl intro}
  Fix $d\geq 2, \beta_0>0$. There exists a constant $C>0$ so that if $\beta \geq \beta_0$, $R\subset \R^d$ is a cuboid with side lengths $l_1, \ldots, l_d>0$, and $\lambda>0$ then
  \begin{align*}
    \Bigl|N(-\Delta_R^{\beta \sqrt{\lambda}}, \lambda) &- L_{0, d}^{\rm sc}|R|\lambda^{d/2}- \frac{1}{4}L_{0, d-1}(\beta)\Haus^{d-1}(\partial R)\lambda^{(d-1)/2}\Bigr|\\
    & \leq C \Haus^{d-1}(\partial R)\lambda^{(d-1)/2}\bigl((\min_i l_i \sqrt{\lambda})^{(1-d)/(d+1)}+ (\min_i l_i \sqrt{\lambda})^{1-d}\bigr)\, .
  \end{align*}
\end{theorem}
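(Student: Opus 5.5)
Separation of variables reduces the problem to a lattice-point count for a deformed lattice in a ball. On the interval $(0,l)$ with Robin parameter $\beta\sqrt{\lambda}$ at both ends, the eigenfunctions are $\cos(kx)+c\sin(kx)$. Their eigenvalues are $k^2$, where $k>0$ solves the secular equation
\[
\tan(kl)=\frac{2k\beta\sqrt\lambda}{k^2-\beta^2\lambda}.
\]
Equivalently, $k_n l=\pi(n-1)+2\arctan\bigl(\beta\sqrt\lambda/k_n\bigr)$ for $n\ge1$. We therefore write $k_n=\frac{\pi}{l}\bigl(n-1+\theta(k_n)\bigr)$ with $\theta(k)=\frac{2}{\pi}\arctan(\beta\sqrt\lambda/k)\in(0,1)$. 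For $\beta>0$ there are no negative eigenvalues. The Robin eigenvalues of $R$ are then $\sum_i k_{n_i}^{(i)2}$, and
\[
N(-\Delta_R^{\beta\sqrt\lambda},\lambda)=\#\Bigl\{n\in\N^d:\ \sum_i k^{(i)2}_{n_i}\le\lambda\Bigr\}.
\]
Rescale $k=\sqrt\lambda\,\kappa$, so that $\theta$ depends only on $\beta/\kappa$. This is where the critical scaling makes the shift a fixed function of the rescaled frequency. The task becomes counting points of the form $\frac{\pi}{l_i\sqrt\lambda}\bigl(m_i+\theta(\beta/\kappa_i)\bigr)$ inside the unit ball quadrant. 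Equivalently, we count integer vectors $m\in\N_0^d$ with $\Phi_i(m_i)$ inside the unit ball, where each $\Phi_i$ is a monotone reparametrisation. Monotonicity holds because $\kappa\mapsto l_i\sqrt\lambda\,\kappa/\pi-\theta(\beta/\kappa)$ is increasing.

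The plan is to apply a one-dimensional counting identity in the last variable and sum over the others, mimicking the proof of the two-term Weyl law for Dirichlet and Neumann cuboids. For fixed $(m_1,\dots,m_{d-1})$, the number of admissible $m_d$ equals
\[
\frac{l_d\sqrt\lambda}{\pi}\,\rho-\theta\Bigl(\frac{\beta}{\rho}\Bigr)+\psi\text{-term},
\]
where $\rho=\bigl(1-\sum_{i<d}\kappa_i^2\bigr)_+^{1/2}$ and $\psi$ is a sawtooth remainder. Iterating over all coordinates, or more symmetrically performing an Euler--Maclaurin/Poisson-type comparison in each variable, the main term becomes $L^{\rm sc}_{0,d}|R|\lambda^{d/2}$. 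Each face contributes a boundary term of the form
\[
-\frac{l'\,\lambda^{(d-1)/2}}{\pi}\int_{B^{d-1}_+}\Bigl(\theta\bigl(\beta/\sqrt{1-|\xi|^2}\bigr)-\tfrac12\Bigr)\,d\xi.
\]
Here $\theta-\frac12$ measures the half-integer shift relative to the midpoint rule. The Jacobian from the $\kappa\leftrightarrow m$ change of variables produces a further $O(1)$-per-face correction. Summing over faces and evaluating the integral in polar coordinates (with $s=\sqrt{1-|\xi|^2}$ and $\arctan(\beta/s)$, integrated by parts into $\int(1-s^2)^{(d-1)/2}\frac{\beta}{\beta^2+s^2}ds$) should reproduce exactly $\frac14 L_{0,d-1}(\beta)\Haus^{d-1}(\partial R)\lambda^{(d-1)/2}$. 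The limits $\beta\to0,\infty$ serve as a check: they give $\pm\frac14 L^{\rm sc}_{0,d-1}$, the Neumann and Dirichlet values. As a cross-check of the constant, one can compare with the Riesz-mean asymptotics of \cite{Baur2026}.

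The main obstacle is controlling the remainder: the sum over the remaining $d-1$ lattice variables of the sawtooth errors $\psi$, evaluated along the curved, $\theta$-deformed sphere. A crude $O(1)$ bound per line gives only $\lambda^{(d-1)/2}$, which is not small relative to the boundary term. To get the rate $(\min_i l_i\sqrt\lambda)^{(1-d)/(d+1)}$, I would use the standard smoothing argument. Sandwich the counting function between Riesz-type or mollified counts at scale $\varepsilon$, whose two-term asymptotics hold with remainder $O(\varepsilon^{-1}\cdot\text{lower order})$, and pay $\varepsilon\lambda^{(d-1)/2}$-type errors for the smoothing. Optimising $\varepsilon$ then yields the exponent $(1-d)/(d+1)$, as in the classical van der Corput/Hlawka estimate for lattice points in anisotropically dilated balls. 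The exponential-sum or Fourier decay bound needed here must hold uniformly in $\beta\ge\beta_0$ and in the side lengths. Uniformity is plausible because $\theta$ is smooth with derivatives bounded in terms of $\beta_0$; the lower bound $\beta\ge\beta_0$ keeps $\theta$ smooth near $\kappa=0$.

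The additional $(\min_i l_i\sqrt\lambda)^{1-d}$ term covers the regime where some side is short compared with the wavelength, where the asymptotic analysis is vacuous. There I would bound all three quantities on the left trivially by comparison with Dirichlet/Neumann counts, via $N^{\rm D}\le N^{\rm Robin}\le N^{\rm N}$. Each is bounded by $C\prod_i(1+l_i\sqrt\lambda)$, which is $\lesssim \Haus^{d-1}(\partial R)\lambda^{(d-1)/2}(\min_i l_i\sqrt\lambda)^{1-d}$ when $\min_i l_i\sqrt\lambda\lesssim1$.
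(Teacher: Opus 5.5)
You have the paper's overall architecture: a lattice-point reformulation, a Hlawka-type Poisson/mollification estimate, and Dirichlet--Neumann bracketing when $\min_i l_i\sqrt\lambda\lesssim 1$. Using the exact secular equation is a legitimate shortcut. With $n=m+1$ and $G_i(\kappa)=\frac{l_i\sqrt\lambda}{\pi}\kappa+\frac{2}{\pi}\arctan(\kappa/\beta)$, the Robin count is exactly $\#(\N^d\cap G(\overline{B_1}))$, so you could skip the two-sided bracket $N^+_{\beta,R}\le N\le N^-_{\beta,R}$ built from Lemma~\ref{lem: arctan bounds lambdak}. Your constant is also right.

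But the remainder, which is what proves the theorem, is only asserted, and as you set it up it does not go through. Hlawka-type smoothing controls lattice counts in a smooth body whose curvature is bounded below. Your count runs over the orthant $\N_0^d$, whose flat faces carry exactly the $\lambda^{(d-1)/2}$ term; normal to them $\widehat{\1}$ decays only like $|\xi|^{-1}$. Bounding the sawtooth sums $\sum_{m'}\psi(\cdot)$ of your iterated scheme is itself a $(d-1)$-dimensional lattice problem that your smoothing step does not address. So ``Euler--Maclaurin for the constant, smoothing for the rest'' is not yet an argument.

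The missing idea is reflection. $G_i$ is odd, so $G(\overline{B_1})=\frac{\sqrt\lambda}{\pi}T_R\Psi(\overline{B_1})$, with $\Psi_i(\kappa)=\kappa_i+\frac{2}{l_i\sqrt\lambda}\arctan(\kappa_i/\beta)$, is a smooth convex body symmetric under coordinate reflections. Count lattice points in all of it and subtract the coordinate-hyperplane counts. These are Robin counts on the lower-dimensional cuboids $R_I$ and need only first-order bounds, as in the proof of Lemma~\ref{lem: asymptotics in terms of Epm measure}. The second term then comes rigorously from the linear part of $|\Psi(B_1)|=\int_{B_1}\prod_i\Psi_i'(\kappa_i)\,d\kappa$ plus the $-\tfrac12$ hyperplane corrections. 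Your per-face quantity $\int_{B^{d-1}_+}(\tfrac12-\theta)$ already equals this combination, because $\int_0^\rho\frac{2}{\pi}\frac{\beta}{\beta^2+t^2}\,dt=1-\theta(\beta/\rho)$. Adding a further Jacobian correction to every face would therefore double count.

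Second, the uniform estimate is the heart of the proof, not a citation. The classical bound for a fixed ellipsoid has constants depending on the $l_i$. What you need is
$\bigl|h^d\#((h\Z)^d\cap TE)-|TE|\bigr|\lesssim|TE|(h/s_1(T))^{2d/(d+1)}$,
uniformly over a family of perturbed balls and for \emph{all} $h\le s_1(T)/2$. This is Proposition~\ref{prop: Lattice point counting}, applied with $T=T_R$, $h=\lambda^{-1/2}$ and $s_1(T_R)=\min_i l_i$. It rests on three ingredients:
\begin{itemize}
\item $|T^\top\xi|\ge s_1(T)|\xi|$ in the Poisson sum;
\item uniform decay $|\widehat{\1_E}(\xi)|\lesssim|\xi|^{-(d+1)/2}$ over the family;
\item balancing the mollification cost $\varepsilon|TE|$ against a Fourier tail of size $|TE|(h/s_1(T))^d\varepsilon^{-(d-1)/2}$.
\end{itemize}
Taking $\varepsilon=(h/s_1(T))^{2d/(d+1)}$ and using $|R|\le\tfrac12\min_i l_i\,\Haus^{d-1}(\partial R)$ gives exactly the $(\min_i l_i\sqrt\lambda)^{(1-d)/(d+1)}$ term. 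Your bookkeeping, an $\varepsilon^{-1}$ loss against an $\varepsilon\lambda^{(d-1)/2}$ cost, does not produce this exponent in general.

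Finally, the ``Riesz-type'' sandwich is not an alternative. Recovering $N$ from Riesz means by a Tauberian step would need Riesz-mean remainders of size $o(\lambda^{d/2})$, far beyond two-term asymptotics.
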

To our knowledge, Theorem \ref{thm: sc two-term Weyl intro} is the first two-term asymptotic expansion for counting functions of Robin Laplace operators in the critical scaling regime. Corresponding results for Riesz means of order $\gamma=1$ were obtained in \cite{Frank2012} on domains with $C^1$-boundary and more general Robin boundary conditions, and in \cite{Baur2026} for Riesz means of arbitrary positive order on cuboids.

Theorem~\ref{thm: sc two-term Weyl intro} also determines the asymptotic behavior outside the critical scaling regime. Specifically, if $\beta(\lambda)$ is a non-negative function satisfying $\lim_{\lambda \to \infty}\beta(\lambda)/\sqrt{\lambda} \in \{0, \infty\}$, then for any cuboid $R\subset \R^d$, 
\begin{equation}\label{eq: two-term asymptotics non-critical}
  N(-\Delta_R^{\beta(\lambda)}, \lambda) = L_{0, d}^{\rm sc}|R|\lambda^{d/2} \pm \frac{1}{4}L_{0, d-1}^{\rm sc}\Haus^{d-1}(\partial R)\lambda^{(d-1)/2} + o(\lambda^{(d-1)/2})\, , 
\end{equation}
where the second term comes with a plus if the limit of $\beta (\lambda)/\sqrt{\lambda}$ is zero and a minus if the limit is infinite. These asymptotics coincide with those that are well-known to hold for the Neumann and Dirichlet Laplace operators on $R$, respectively.

To see this, suppose first that $\lim_{\lambda \to \infty}\beta(\lambda)/\sqrt{\lambda} =0$. By monotonicity with respect to the Robin parameter, 
\begin{equation}\label{eq: comparison non-critical}
  \frac{N(-\Delta_R^{\beta' \sqrt{\lambda}}, \lambda)-L_{0, d}^{\rm sc}|R|\lambda^{d/2}}{\Haus^{d-1}(\partial R)\lambda^{(d-1)/2}}\leq \frac{N(-\Delta_R^{\beta(\lambda)}, \lambda)-L_{0, d}^{\rm sc}|R|\lambda^{d/2}}{\Haus^{d-1}(\partial R)\lambda^{(d-1)/2}}\leq \frac{N(-\Delta_R^{\rm N}, \lambda)-L_{0, d}^{\rm sc}|R|\lambda^{d/2}}{\Haus^{d-1}(\partial R)\lambda^{(d-1)/2}}\, , 
\end{equation}
for any $\beta'>0$ and $\lambda$ so large that $\beta'\geq \beta(\lambda)/\sqrt{\lambda}$. As $\lambda \to\infty$, the left-hand side of \eqref{eq: comparison non-critical} converges to $\frac{1}{4}L_{0, d-1}(\beta')$ by Theorem \ref{thm: sc two-term Weyl intro} while the right-hand side converges to $\frac{1}{4}L_{0, d-1}^{\rm sc}$ by the well-known asymptotics for $N(-\Delta_R^{\rm N}, \lambda)$. The asymptotics in \eqref{eq: two-term asymptotics non-critical} follow by sending $\beta' \to 0$ and appealing to \eqref{eq: limits of coefficient}. The case $\beta(\lambda)/\sqrt{\lambda}\to \infty$ follows analogously by comparing instead with $N(-\Delta_R^{\rm D}, \lambda)$ and $N(-\Delta_R^{\beta'\sqrt{\lambda}}, \lambda)$, and then letting $\beta'\to \infty$.

In particular, \eqref{eq: two-term asymptotics non-critical} recovers the classical asymptotics for fixed Robin Laplace operators. More generally, it shows that the second asymptotic coefficient depends on the Robin parameter only in the critical scaling regime.

Spectral asymptotics for fixed Laplace operators with Dirichlet, Neumann, or Robin boundary conditions have a long history. Weyl \cite{Weyl1911} established the leading-order asymptotics in rather general domains and conjectured that the two-term asymptotic formula should hold. For sets with smooth boundary satisfying an additional assumption on the associated billiard flow, Ivrii \cite{Ivrii1980} famously proved the validity of the conjectured two-term asymptotic formula. For Riesz means of arbitrary positive order, two-term asymptotics have recently been proved in bounded Lipschitz sets. For the Dirichlet and Neumann cases this was done in \cite{Frank2026a}, while the Robin case was treated in \cite{Frank2025}. 

For cuboids, analyzing the spectrum of the Dirichlet and Neumann Laplace operators is closely tied to lattice point counting, and the corresponding two-term asymptotic expansions can be deduced from classical lattice-point counting results. Although the connection is less direct in the Robin setting, it also forms the starting point of our proof of Theorem~\ref{thm: sc two-term Weyl intro}.

Given the results discussed above, it is natural to conjecture that the two-term asymptotics established here, and the corresponding asymptotics for Riesz means, extend to more general classes of domains and more general Robin boundary conditions.

The second main result of this paper concerns a Robin analogue of P\'olya's conjecture. Famously, P\'olya conjectured that the inequalities
\begin{equation*}
  N(-\Delta_\Omega^{\rm D}, \lambda) \leq L_{0, d}^{\rm sc}|\Omega|\lambda^{d/2}\leq N(-\Delta_\Omega^{\rm N}, \lambda)
\end{equation*}
hold for all $\lambda \geq 0$ and open $\Omega \subset \R^d$ with $|\Omega|<\infty$. Despite receiving considerable attention, P\'olya's conjecture remains open in general. In the special case of cuboids or, more generally, tiling domains, P{\'o}lya himself proved the conjecture~\cite{Polya1961}. Beyond these cases, only rather few examples of domains for which the conjecture is valid are known. Notably, P\'olya's conjecture was only very recently established when $\Omega$ is a ball \cite{Filonov2023, Filonov2026, Li2026}. The next theorem shows that among Robin Laplace operators on cuboids, the inequality conjectured by P\'olya for the Dirichlet case persists provided that the Robin parameter is sufficiently close to the Dirichlet endpoint.

\begin{theorem} \label{thm: sc_inequality}
Fix $d \geq 2$. Then there exists a constant $\beta(d)>0$ such that for any cuboid $R \subset \mathbb{R}^d$, $\beta \geq \beta(d)$, and $\lambda > 0$ we have
\begin{align*}
  N(-\Delta_{R}^{\beta\sqrt{\lambda}}, \, \lambda) \leq L_{0, d}^{\rm sc} |R| \lambda^{d/2}\, .
\end{align*}
Moreover, if $\beta > \beta(d)$ then the inequality is strict for each cuboid $R$ and $\lambda>0$. If instead $\beta <\beta(d)$ then there exists a cuboid $R$ and a $\lambda >0$ for which the inequality is violated. 
\end{theorem}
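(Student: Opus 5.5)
The plan has four parts: define $\beta(d)$ as a threshold via monotonicity in the Robin parameter, dominate the Robin counting function by a Dirichlet counting function on a slightly enlarged cuboid, prove a Pólya inequality for cuboids with a negative boundary term, and combine the two for large $\beta$. Throughout I use separation of variables and scaling. The eigenvalues of $-\Delta_R^{b}$ are sums $\sum_i \mu_{n_i}(l_i,b)$, $n\in\N^d$, of one-dimensional Robin eigenvalues on $(0,l_i)$. Writing $\mu_n=k_n^2$, the quantization condition is
\begin{equation*}
  k_n l + 2\arctan(k_n/b) = n\pi\, , \qquad n\geq 1\, .
\end{equation*}
Both sides of the claimed inequality are invariant under $R\mapsto tR$, $\lambda\mapsto t^{-2}\lambda$ when $b=\beta\sqrt\lambda$, so one may take $\lambda=1$ and $b=\beta$.

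\emph{Reduction to existence.} The counting function $N(-\Delta_R^{\beta\sqrt\lambda},\lambda)$ is non-increasing in $\beta$ by form monotonicity. Hence
\begin{equation*}
  \beta(d):=\inf\{\beta>0:\ N(-\Delta_R^{\beta\sqrt\lambda},\lambda)\le L^{\rm sc}_{0,d}|R|\lambda^{d/2}\ \text{for all cuboids } R \text{ and all } \lambda>0\}
\end{equation*}
automatically has the stated dichotomy, provided the set is nonempty (so $\beta(d)<\infty$) and $\beta(d)>0$.
\begin{itemize}
\item Positivity of $\beta(d)$ follows from Theorem~\ref{thm: sc two-term Weyl intro}. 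For small $\beta$ we have $L_{0,d-1}(\beta)>0$ by \eqref{eq: limits of coefficient}, so a cube with $\lambda\to\infty$ violates the inequality.
\item The inequality holds at $\beta(d)$ itself: for fixed $R,\lambda$ the eigenvalues depend continuously on $b$, and $N$ is right-continuous in the cut-off, so taking the limit $\beta\downarrow\beta(d)$ gives the bound.
\item Strictness for $\beta>\beta(d)$: suppose equality held at some $\beta>\beta(d)$. Each 1D root $k_n$ is strictly increasing in $b$, so passing to $\beta'\in(\beta(d),\beta)$ keeps $N$ at least as large. One then perturbs $\lambda$ slightly upward: the integer $N$ is locally constant in $\lambda$ away from eigenvalues, while $|R|\lambda^{d/2}$ increases. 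Playing these two facts against each other should produce a violation at some $\beta'>\beta(d)$, contradicting the definition of $\beta(d)$. I expect this to need some care but no new idea.
\end{itemize}

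\emph{Comparison with Dirichlet.} Since $\arctan x\le x$, the quantization gives $k_n(l+2/b)\ge n\pi$. So each Robin eigenvalue is at least the corresponding Dirichlet eigenvalue of the interval of length $l+2/b$, and therefore
\begin{equation*}
  N(-\Delta_R^{b},\lambda)\le N(-\Delta^{\rm D}_{R_b},\lambda)\, ,\qquad R_b:=\prod_i (0,l_i+2/b)\, .
\end{equation*}
Also, if $l_i\sqrt\lambda+2/\beta<\pi$ for some $i$, then $N=0$ and there is nothing to prove. So we may assume $l_i\sqrt\lambda\ge \pi-2/\beta\ge \pi/2$ for all $i$ once $\beta\geq 4/\pi$. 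Plain Pólya for $R_b$ only gives $L^{\rm sc}_{0,d}|R_b|\lambda^{d/2}$. Here
\begin{equation*}
  |R_b|-|R|\le C_d\, \beta^{-1}\Haus^{d-1}(\partial R)\lambda^{-1/2}\, ,
\end{equation*}
uniformly, because $2/(b l_i)\le 4/(\pi\beta)$ is bounded. This boundary-sized loss must be absorbed.

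\emph{Key lemma (main obstacle): Pólya with a boundary deficit.} I would prove that there is $c_d>0$ such that for every cuboid $Q$ with all $l_i\sqrt\lambda\ge\pi$ (the only case with $N^{\rm D}\ne0$),
\begin{equation*}
  N(-\Delta^{\rm D}_Q,\lambda)\le L^{\rm sc}_{0,d}|Q|\lambda^{d/2}-c_d\Haus^{d-1}(\partial Q)\lambda^{(d-1)/2}\, .
\end{equation*}
I would refine Pólya's cube argument. In the variables $\xi_i=\pi n_i/l_i$, each counted lattice point $n$ owns the box $\prod_i[\pi(n_i-1)/l_i,\pi n_i/l_i]$, and these boxes are disjoint. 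Every such box lies in $B_+\cap(B-v)$, where $B_+=B_{\sqrt\lambda}\cap[0,\infty)^d$ and $v=(\pi/l_1,\ldots,\pi/l_d)$: if the top corner $\xi$ of the box lies in the ball, then $\xi-v$ has non-negative coordinates (as $n_i\ge1$) and every point of the box lies in the ball. Hence
\begin{equation*}
  N\,\frac{\pi^d}{|Q|}\le |B_+|-|B_+\setminus(B-v)|\, .
\end{equation*}
Since $0<v_i\le\sqrt\lambda$, an elementary geometric estimate should give
\begin{equation*}
  |B_+\setminus(B-v)|\ge c\,\lambda^{(d-1)/2}\sum_i v_i\, .
\end{equation*}
I would try to prove it by looking, for each $i$, at the cylinder over the set of points of $B_+$ within distance $\sim\sqrt\lambda$ of the $e_i$-pole: translating by $v$ moves that cap out of the ball by a slab of width $\gtrsim v_i$. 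Because $\sum_i v_i |Q| = \tfrac{\pi}{2}\Haus^{d-1}(\partial Q)$, this yields the lemma.

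\emph{Conclusion.} Apply the lemma to $Q=R_b$, using $\Haus^{d-1}(\partial R_b)\ge\Haus^{d-1}(\partial R)$, together with the comparison step. This gives
\begin{equation*}
  N(-\Delta_R^{\beta\sqrt\lambda},\lambda)\le L^{\rm sc}_{0,d}|R|\lambda^{d/2}+\bigl(C_dL^{\rm sc}_{0,d}\beta^{-1}-c_d\bigr)\Haus^{d-1}(\partial R)\lambda^{(d-1)/2}\, ,
\end{equation*}
which is at most $L^{\rm sc}_{0,d}|R|\lambda^{d/2}$ for all $\beta$ large enough. This shows the defining set is nonempty, so $\beta(d)<\infty$. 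The uniform deficit estimate in the key lemma, valid down to $l_i\sqrt\lambda=\pi$, is where I expect the real work to lie.
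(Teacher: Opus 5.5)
There is a genuine gap, and it sits in your key lemma, which you correctly flag as the crux. The claim that every counted box $\xi-[0,v]$ lies in $B_+\cap(B-v)$ is false. The box contains its top corner $\xi$, and $\xi\in B-v$ would require $\xi+v\in B$, which fails for the outermost counted points. Your justification only shows that the box lies in $B_+$.

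In fact the inclusion runs the other way. If $\eta\ge 0$ and $\eta+v\in B$, then the lattice point with $\xi_i=v_i\max\{1,\lceil\eta_i/v_i\rceil\}$ satisfies $\eta\le\xi\le\eta+v$. So $\xi$ is counted and $\eta$ lies in its box, which gives $|B_+\cap(B-v)|\le N\pi^d/|Q|$, the classical lower bound.

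Your inequality would produce a leading-order deficit of $\tfrac12 L^{\rm sc}_{0,d-1}\Haus^{d-1}(\partial Q)\lambda^{(d-1)/2}$. That is twice the true Dirichlet boundary term, so it contradicts the known two-term asymptotics. Moreover, the same reasoning never uses $d\ge 2$, and in $d=1$ it would give $\lfloor x\rfloor\le x-1$. The real deficit is an averaging effect of the curved boundary and needs a genuinely multidimensional argument.

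The missing input is the refined P\'olya bound the paper uses, Lemma~\ref{lem: gittinsLarsonlemma}. Fix $b\le\min\{b_0,\pi A_d/(2B_d)\}$ and use $x=1/(\min_i l_i\sqrt\lambda)\le 1/\pi$ together with $\Haus^{d-1}(\partial Q)\le 2d|Q|/\min_i l_i$. This yields your key lemma with $c_d=L^{\rm sc}_{0,d}A_d b/(4d)$.

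With that substitution your existence argument is correct and is a genuine variant of the paper's route. You enlarge the cuboid to $R_b$ via $\arctan x\le x$. The paper instead dominates the Robin count by the shifted-lattice count $N_{s,R}$, and then by the Dirichlet count on $R$ at the enlarged cut-off $\lambda(1+\delta)$. In both routes a loss of boundary order is absorbed by the Gittins--Larson deficit.

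The threshold bookkeeping also needs repair.

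\emph{The endpoint $\beta=\beta(d)$.} Continuity in $\beta$ does not suffice. As $\beta\downarrow\beta(d)$ the eigenvalues decrease to their values at $\beta(d)$, which amounts to approaching the cut-off from below. So the bounds for $\beta>\beta(d)$ only control $N(-\Delta_R^{\beta(d)\sqrt\lambda},\lambda)$ minus the multiplicity of $\lambda$. You need slack, as the paper provides. Suppose the bound fails at $\beta(d)$ with count $n$, and dilate $R$ to $tR$ with $t>1$ close to $1$. This strictly lowers $\lambda_n$ below $\lambda$ while keeping $n>L^{\rm sc}_{0,d}|tR|\lambda^{d/2}$. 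Continuity in $\beta$ then gives a violation at some $\beta>\beta(d)$.

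\emph{Strictness.} Moving $\lambda$ upward only enlarges the right-hand side, so it cannot produce a violation. Instead, suppose equality holds at some $\beta>\beta(d)$, and set $k=L^{\rm sc}_{0,d}|R|\lambda^{d/2}$. Strict monotonicity in the Robin parameter gives $\lambda_k(-\Delta_R^{\beta'\sqrt\lambda})<\lambda$ for $\beta'\in(\beta(d),\beta)$. Decreasing $\lambda$ slightly then keeps $N\ge k$, while $L^{\rm sc}_{0,d}|R|\lambda^{d/2}$ drops below $k$.

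\emph{Positivity.} Your argument for $\beta(d)>0$ via Theorem~\ref{thm: sc two-term Weyl intro} is fine. The paper instead uses the Neumann limit $\beta\to 0$ together with strict P\'olya for the Neumann Laplacian.
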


As noted already in \cite[Remark 5.2]{Baur2026}, for $d=1$ the bound in Theorem \ref{thm: sc_inequality} fails to hold for all $\lambda> 0$ independently of how large $\beta$ is chosen. The asymptotics of Theorem \ref{thm: sc two-term Weyl intro} and the monotonicity of $\beta \mapsto L_{0, d-1}(\beta)$ show that $\beta(d)$ must be at least as large as the solution of $L_{0, d-1}(\beta)=0$. We believe, however, that this lower bound for $\beta(d)$ is not attained, cf. \cite[Section 7]{Baur2026}.

 It is natural to ask whether the conclusion of Theorem~\ref{thm: sc_inequality} extends beyond cuboids. In analogy with P\'olya's conjecture, one may ask whether the inequality remains valid for all open sets of finite measure. However, since this would strengthen P\'olya's conjecture for the Dirichlet Laplacian, this question appears to be well beyond current techniques.

\subsection{Consequences for Riesz means}

Theorems \ref{thm: sc two-term Weyl intro} and \ref{thm: sc_inequality} can be applied to recover the corresponding results for Riesz means established in~\cite{Baur2026}. The key ingredient is the Aizenman--Lieb identity \cite{Aizenman1978}, in the form 
\begin{equation}\label{eq: AL identity}
  \Tr(-\Delta_R^{\beta\sqrt{\lambda}}-\lambda)_\limminus^\gamma=
\gamma
\int_0^\lambda
(\lambda-\mu)^{\gamma-1}
N(-\Delta_R^{\beta\sqrt{\lambda}}, \mu)
\, d\mu\, .
\end{equation}

From \eqref{eq: AL identity} combined with Theorem \ref{thm: sc two-term Weyl intro} it follows that for any $\gamma>0, \beta>0$, and cuboid $R\subset \R^d$, 
\begin{equation*}
    \Tr(-\Delta_{R}^{\beta\sqrt{\lambda}}-\lambda)_\limminus^\gamma = 
    L_{\gamma, d}^{\rm sc}|R|\lambda^{\gamma+d/2}+\frac{1}{4}L_{\gamma, d-1}(\beta) \Haus^{d-1}(\partial R)\lambda^{\gamma+(d-1)/2}+ o(\lambda^{\gamma+(d-1)/2})
\end{equation*}
as $\lambda \to \infty$, thereby recovering \cite[Theorem 1.2]{Baur2026} by a different argument. 

Similarly, combining \eqref{eq: AL identity} with Theorem \ref{thm: sc_inequality} yields that for any $\gamma>0$, $\beta \geq\beta(d)$, $\lambda \geq 0, $ and cuboid $R\subset \R^d$ it holds that
\begin{equation}\label{eq: Robin Berezin}
    \Tr(-\Delta_{R}^{\beta\sqrt{\lambda}}-\lambda)_\limminus^\gamma \leq L_{\gamma, d}^{\rm sc}|R|\lambda^{\gamma+d/2}\, .
\end{equation}
This inequality is of the same form as that obtained in \cite[Theorem 1.3]{Baur2026}. However, there it is proved that there exists a sharp threshold $\beta(\gamma, d)$ so that \eqref{eq: Robin Berezin} holds whenever $\beta \geq \beta(\gamma, d)$. We expect that the sharp threshold $\beta(\gamma,d)$ is strictly smaller than $\beta(d)$. What we denote in this article by $\beta(d)$ corresponds to $\beta(0, d)$ in the notation of \cite{Baur2026}. With the convention that $\beta(0, 1)=\infty$, the inequalities between these numbers in \cite[Corollary 5.5]{Baur2026} as well as the proofs given there extend to the case $\gamma=0$ as we now know that $\beta(0,d)$ is well-defined when $d\geq 2$. 

That one can obtain asymptotics and bounds for Riesz means from those of counting functions by integration is well-known. One point that requires some care here is that the counting functions appearing in \eqref{eq: AL identity} depend simultaneously on the parameters $\lambda$ and~$\mu$. This does not cause difficulties, however, since the error estimate in Theorem \ref{thm: sc two-term Weyl intro} is uniform with respect to $\beta \geq \beta_0$ and Theorem~\ref{thm: sc_inequality} is valid for all $\beta \geq \beta(d)$. Indeed, as $N(-\Delta_R^{\beta\sqrt{\lambda}}, \mu)= N(-\Delta_R^{\beta\sqrt{\lambda/\mu}\sqrt{\mu}}, \mu)$ and $\sqrt{\lambda/\mu}\beta \geq \beta$ for all $0< \mu\leq \lambda$, Theorem \ref{thm: sc two-term Weyl intro} and Theorem \ref{thm: sc_inequality} can therefore be applied to control the counting functions in the integral. For details of the computations for the two-term asymptotics we refer the reader to the proof of \cite[Theorem 2.4]{Baur2026}. There, the corresponding lifting is used to deduce asymptotics of Riesz means of order $\gamma>1$ from their validity at $\gamma=1$. The computation required here is analogous. In fact, the argument yields a non-asymptotic bound for the difference of the Riesz mean and its two-term asymptotic expansion analogous to that for the counting function in Theorem~\ref{thm: sc two-term Weyl intro}, and hence a bound of the same form as~\cite[Corollary 3.2]{Baur2026}. The error estimate obtained from lifting Theorem~\ref{thm: sc two-term Weyl intro} has a better asymptotic order than that in \cite[Corollary 3.2]{Baur2026}, except when $d=2$ and $\gamma\geq 1/2$. However, the result in \cite{Baur2026} has the advantage of providing a bound for the remainder that is completely independent of the parameter $\beta \in (0, \infty)$.

\subsection{An application in asymptotic spectral shape optimization}
\label{sec: shape opt}

The motivation for the present work comes from an asymptotic shape optimization problem in spectral theory. Specifically, we are interested in understanding how sets $\Omega$ which approximately realize the supremum
\begin{equation}
  \sup\bigl\{N(-\Delta_\Omega^{\beta(\lambda)}, \lambda): \Omega \subset \R^d \mbox{ open, } |\Omega|=1\bigr\} \label{eq:so_problem}
\end{equation}
behave as $\lambda \to \infty$ depending on the asymptotic behavior of $\beta(\lambda)$. While we are far from understanding this question in the stated generality, the results obtained in this paper allow us to give a rather satisfactory description when the optimization is restricted to the class of cuboids.

Define
\begin{equation*}
  M_d(\lambda, \beta) = \sup\bigl\{N(-\Delta_R^{\beta}, \lambda): R \subset \R^d \mbox{ a cuboid, } |R|=1\bigr\}\, , 
\end{equation*}
and recall that $\beta(\gamma, d)$ denotes the sharp threshold so that \eqref{eq: Robin Berezin} holds for all cuboids $R\subset \R^d$, $\lambda\geq 0$, and $\beta \geq \beta(\gamma, d)$. As a consequence of Theorem~\ref{thm: sc two-term Weyl intro}, we obtain the following description of sequences of cuboids which asymptotically realize the supremum defining $M_d(\lambda, \beta)$ as $\lambda \to \infty$.
\begin{proposition}\label{prop: shape opt}
  Fix $d\geq 2$. Let $\{\lambda_j\}_{j\geq1}, \{\beta_j\}_{j\geq 1}$ be sequences of positive numbers, and $\{R_j\}_{j\geq 1}$ be a sequence of cuboids in $\R^d$. Assume that $|R_j|=1$ for each $j\geq 1$, 
  \begin{equation*}
    \lim_{j\to \infty} \lambda_j = \infty\, , \quad \mbox{and}\quad \lim_{j\to \infty} \frac{N(-\Delta_{R_j}^{\beta_j}, \lambda_j)-M_d(\lambda_j, \beta_j)}{\lambda_j^{(d-1)/2}}=0\, .
  \end{equation*}
  \begin{enumerate}[label=(\roman*)]
    \item If $\limsup_{j\to \infty}\beta_j/\sqrt{\lambda_j}<\beta(1/2, d-1)$, then the sequence $\{R_j\}_{j\geq 1}$ has no converging subsequences.
    \item If $\liminf_{j\to \infty}\beta_j/\sqrt{\lambda_j}>\beta(1/2, d-1)$, then the sequence $\{R_j\}_{j\geq 1}$ converges to the unit cube as $j\to \infty$.
  \end{enumerate}
\end{proposition}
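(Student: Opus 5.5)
The plan is to compare the two-term expansion of Theorem~\ref{thm: sc two-term Weyl intro} for non-degenerate cuboids with explicit competitors. The competitors are built by separation of variables from cuboids having one or more sides of order $1/\sqrt{\lambda}$. After passing to subsequences we may assume $b_j:=\beta_j/\sqrt{\lambda_j}\to b\in[0,\infty]$. By monotonicity in the Robin parameter, we may also replace $b_j$ by a fixed $b'$ slightly above or below $b$, according to which inequality is needed. Note that a cuboid of volume one converges, up to permutation of the sides, exactly when its side lengths stay in a compact subset of $(0,\infty)$. In that case $\min_i l_i\sqrt{\lambda_j}\to\infty$, and Theorem~\ref{thm: sc two-term Weyl intro} gives
\[
N(-\Delta_{R_j}^{\beta_j},\lambda_j)=L_{0,d}^{\rm sc}\lambda_j^{d/2}+\tfrac14 L_{0,d-1}(b_j)\Haus^{d-1}(\partial R_j)\lambda_j^{(d-1)/2}+o(\lambda_j^{(d-1)/2}).
\]

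\emph{Part (i).} Suppose a subsequence of $\{R_j\}$ converges. Then along it $N=L_{0,d}^{\rm sc}\lambda_j^{d/2}+O(\lambda_j^{(d-1)/2})$. Fix $b_*$ with $\limsup b_j<b_*<\beta(1/2,d-1)$. By the definition of the sharp threshold, and after scaling, there is a $(d-1)$-dimensional cuboid $Q$ for which
\[
\Tr\bigl(-\Delta_Q^{b_*}-1\bigr)_\limminus^{1/2}\geq (1+\varepsilon)L_{1/2,d-1}^{\rm sc}|Q|
\]
holds with Robin parameter $b_*$ at spectral level $1$. As a competitor take $R'_j=\lambda_j^{-1/2}Q\times(0,L_j)$, with $L_j$ chosen so that $|R'_j|=1$; thus $L_j\sim\lambda_j^{(d-1)/2}\to\infty$.

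By separation of variables, and by monotonicity (the parameter $\beta_j$ is at most $b_*\sqrt{\lambda_j}$ eventually),
\[
N(-\Delta_{R'_j}^{\beta_j},\lambda_j)\geq\sum_{\nu} N\bigl(-\Delta^{\rm D}_{(0,L_j)},\lambda_j-\nu\bigr),
\]
where $\nu$ runs over the Robin eigenvalues of the scaled $Q$ with parameter $b_*\sqrt{\lambda_j}$. Using $N(-\Delta^{\rm D}_{(0,L)},\mu)\ge L\sqrt{\mu}/\pi-1$ and the fact that the number of $\nu\le\lambda_j$ is bounded (it is scale invariant), we get
\[
N\geq \tfrac{L_j}{\pi}\lambda_j^{1/2}\cdot\lambda_j^{(d-1)/2}\lambda_j^{-(d-1)/2}\Tr(\cdots)^{1/2}-C.
\]
Since $L_{0,1}^{\rm sc}L_{1/2,d-1}^{\rm sc}=L_{0,d}^{\rm sc}$, this yields $N\geq(1+\varepsilon)L_{0,d}^{\rm sc}\lambda_j^{d/2}-C$. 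Hence $M_d$ exceeds the value along the convergent subsequence by order $\lambda_j^{d/2}\gg\lambda_j^{(d-1)/2}$, contradicting near-optimality.

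\emph{Part (ii).} Fix $b_*\in(\beta(1/2,d-1),\liminf b_j)$. I would first invoke the ordering of thresholds from \cite[Corollary 5.5]{Baur2026}, now valid at $\gamma=0$, to get $L_{0,d-1}(b_*)<0$. The unit cube $Q_1$ is an admissible competitor, so
\[
M_d\ge L_{0,d}^{\rm sc}\lambda^{d/2}+\tfrac14 L_{0,d-1}(b_j)\,2d\,\lambda^{(d-1)/2}+o(\lambda^{(d-1)/2}).
\]
The main step is an a priori upper bound, valid for all cuboids of volume one once $\lambda$ is large:
\[
N(-\Delta_R^{\beta},\lambda)\le L_{0,d}^{\rm sc}\lambda^{d/2}-c\,\Haus^{d-1}(\partial R)\lambda^{(d-1)/2}\min\bigl\{1,(\min_i l_i\sqrt\lambda)^{-1}\bigr\}+\dots
\]
(I expect to need exactly this $\min$-factor form.) Its purpose is to rule out degenerating sequences.

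If $\min_i l_i\sqrt{\lambda_j}\to\infty$, this follows from Theorem~\ref{thm: sc two-term Weyl intro} together with $L_{0,d-1}(b_*)<0$. If instead some side satisfies $l\sqrt\lambda=O(1)$, I would slice in that direction, writing $N=\sum_k N(R'',\lambda-\mu_k)$ with $\mu_k$ the thin one-dimensional Robin eigenvalues. I would then bound each term by the $(d-1)$-dimensional estimate and sum. The resulting $(\lambda-\mu_k)^{(d-1)/2}$ sums are Riesz means of order $(d-1)/2\ge1/2$ of the thin interval, and these lie strictly below Weyl because $b_*$ exceeds the threshold (lifting from order $1/2$). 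The strict deficit is then of order $\lambda^{d/2}$ times a positive constant, which beats the cube.

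I expect this degenerate case to be the main obstacle. It requires uniform control over the mixed regime, where some sides are thin and others are of intermediate size, and it would likely need an induction on the number of thin sides. Granting it, near-optimal sequences satisfy $\min_i l_i\sqrt{\lambda_j}\to\infty$. Comparing with the cube and using $L_{0,d-1}(b_j)\le L_{0,d-1}(b_*)<0$ then gives $\limsup\Haus^{d-1}(\partial R_j)\le 2d$. Among volume-one cuboids, $\Haus^{d-1}(\partial R)=2\sum_i\prod_{k\ne i}l_k\ge 2d$, with equality only for the cube; by compactness, equality is approached only near the cube. Hence $R_j\to$ unit cube.
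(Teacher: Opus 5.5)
\paragraph{There is a gap in part (ii).}

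\emph{What is fine.} Your part (i) is correct. So is the closing step of (ii): when $\min_i l_i\sqrt{\lambda_j}\to\infty$, the remainder in Theorem~\ref{thm: sc two-term Weyl intro} is $o(\Haus^{d-1}(\partial R_j)\lambda_j^{(d-1)/2})$. Comparing with the unit cube and using $L_{0,d-1}(b_j)\le L_{0,d-1}(b_*)<0$ then gives $\limsup_j\Haus^{d-1}(\partial R_j)\le 2d$. The sign $L_{0,d-1}(b_*)<0$ is best justified directly. For $b\ge\beta(1/2,d-1)$, the two-term asymptotics of order-$1/2$ Riesz means in dimension $d-1$ force $L_{1/2,d-2}(b)\le 0$. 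Since the bracket defining $L_{\gamma,d}$ depends only on $\gamma+d/2$, $L_{1/2,d-2}$ is a positive multiple of $L_{0,d-1}$.

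\emph{The gap.} It is the regime where $\min_i l_i\sqrt{\lambda_j}$ stays bounded. You grant it, but it is the actual content of (ii), and your sketch does not close it.
\begin{enumerate}
\item[(a)] The a priori bound you propose is false as written. For $\min_i l_i\sqrt{\lambda}\le 1$ its deficit is at least $2c\lambda^{d/2}(\min_i l_i\sqrt\lambda)^{-1}$, so the right-hand side becomes negative as $\min_i l_i\sqrt\lambda\to 0$. What you actually need is only $N(-\Delta_{R_j}^{\beta_j},\lambda_j)\le (1-\eta)L^{\rm sc}_{0,d}\lambda_j^{d/2}$, because the cube already gives $M_d(\lambda_j,\beta_j)\ge L^{\rm sc}_{0,d}\lambda_j^{d/2}-O(\lambda_j^{(d-1)/2})$.
\item[(b)] Slicing off one thin direction fails once a second side is also of order $\lambda^{-1/2}$. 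The leftover $(d-1)$-dimensional factor is then degenerate at the wavelength scale, and its remainder is of the same order $\lambda^{d/2}$ as the main term. Controlling it would require a counting-function bound in dimension $d-1$ that your hypothesis does not provide.
\item[(c)] For $d\ge 3$, the strict one-dimensional bound of order $(d-1)/2$ does not come from Aizenman--Lieb lifting of an order-$1/2$ bound on intervals; that would need $b>\beta(1/2,1)$, which is not assumed.
\end{enumerate}

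\paragraph{The missing idea: separate all thin directions at once.}
\begin{enumerate}
\item[1.] Pass to a subsequence along which each $l_i\sqrt{\lambda_j}$ converges in $[0,\infty]$. Write $R_j=R_j^{\rm thin}\times R_j^{\rm thick}$, where the $m$ thin sides have finite limits $a_i$; $m\le d-1$ because $|R_j|=1$.
\item[2.] If some $a_i=0$, then $N=0$ eventually by the argument of Lemma~\ref{lem: lambda1_greater}.
\item[3.] Otherwise $N(-\Delta_{R_j}^{\beta_j},\lambda_j)=\sum_\nu N(-\Delta_{R_j^{\rm thick}}^{\beta_j},\lambda_j-\nu)$, where $\nu$ runs over the boundedly many eigenvalues $\nu\le\lambda_j$ of the thin factor.
\item[4.] All thick sides satisfy $l_i\sqrt{\lambda_j}\to\infty$, and $|R_j^{\rm thick}|\sim\lambda_j^{m/2}$. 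Hence Lemma~\ref{lem: first order asymptotitcs} in dimension $d-m$ (via \eqref{eq: DN bracketing counting}) bounds the sum by $L^{\rm sc}_{0,d-m}|R_j^{\rm thick}|\Tr(-\Delta^{\beta_j}_{R_j^{\rm thin}}-\lambda_j)^{(d-m)/2}_{\limminus}+o(\lambda_j^{d/2})$.
\item[5.] This is a Riesz mean of order $(d-m)/2$ on an $m$-dimensional cuboid. Its threshold satisfies $\beta((d-m)/2,m)\le\beta(1/2,d-1)<b_*$. This follows by iterating the dimensional reduction $\beta(\gamma+1/2,k-1)\le\beta(\gamma,k)$, which you get by letting one side of a cuboid in $\R^k$ tend to infinity and using $L^{\rm sc}_{\gamma,1}L^{\rm sc}_{\gamma+1/2,k-1}=L^{\rm sc}_{\gamma,k}$.
\item[6.] Strict monotonicity in the Robin parameter and continuity in the limiting thin shape give a uniform $\eta>0$.
\item[7.] Finally, $L^{\rm sc}_{0,d-m}L^{\rm sc}_{(d-m)/2,m}=L^{\rm sc}_{0,d}$ yields $N\le(1-\eta)L^{\rm sc}_{0,d}\lambda_j^{d/2}+o(\lambda_j^{d/2})$, contradicting near-optimality.
\end{enumerate}
No induction on the number of thin sides is needed.
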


The analogue of Proposition \ref{prop: shape opt} in the setting of Riesz means of order $\gamma>0$ was obtained in \cite[Theorem 6.4]{Baur2026}, but the case $\gamma=0$ remained open there. To prove Proposition~\ref{prop: shape opt} one can follow the proof of \cite[Theorem 6.4]{Baur2026} verbatim except for replacing each application of \cite[Theorem 3.1]{Baur2026} by our Theorem~\ref{thm: sc two-term Weyl intro}.

As discussed in \cite{Baur2026}, we believe that $\beta(1/2, d-1)$ is strictly greater than the unique zero of the decreasing function $\beta \mapsto L_{0, d-1}(\beta)$. Thus, the behavioral transition point in Proposition~\ref{prop: shape opt} is not expected to coincide with the point at which the second term in the two-term asymptotic expansion for the counting function changes sign. This is noteworthy, since it shows that the asymptotics for a fixed cuboid, combined with the isoperimetric inequality, do not by themselves predict the transition in the behavior of optimizing sequences.

Asymptotic spectral shape optimization problems akin to \eqref{eq:so_problem} were first studied in \cite{Antunes2013a} where optimization for Dirichlet eigenvalues on rectangles was considered. Subsequently, the corresponding problems for Dirichlet and Neumann eigenvalues on cuboids in arbitrary dimension were treated in~\cite{vdBerg2016, vdBerg2017, Gittins2017}. Problems of this type have also been studied for Riesz means of Dirichlet and Neumann Laplace operators in~\cite{Larson_JST19, Frank2020, Frank2026b}. A common feature of these results is that, at least in certain regimes, the heuristic based on two-term asymptotics and the isoperimetric inequality can be made rigorous. For Robin eigenvalues on rectangles with a fixed Robin parameter, optimizing sequences have instead been shown to degenerate \cite{Antunes2013b, Freitas2019}. Degeneration of optimizing sequences also occurs in the analogous optimization problems for flat tori considered in \cite{Lagace2020}. Proposition \ref{prop: shape opt} exhibits both types of behavior, degeneration and convergence, with the transition between them occurring at a specific point in the critical scaling regime.

\subsection{Outline of proof strategies}
\label{sec: outline}

Both of our main results rely on reducing the spectral problem on a cuboid to suitable lattice-point counting problems. For the proof of Theorem~\ref{thm: sc two-term Weyl intro}, separation of variables together with two-sided estimates for the eigenvalues of the one-dimensional Robin Laplacian allows us to bound the Robin counting function from above and below by two lattice-point counts. After rescaling, these become counts in anisotropic dilations of convex sets which are small perturbations of a ball. A similar situation arises in the analysis of Steklov eigenvalues on cuboids in \cite{Girouard2019}, where the spectral problem is analyzed through approximating by lattice-point counts in a parameter-dependent family of convex sets.

To analyze the lattice-point problems arising, we establish in Section~\ref{sec: Lattice point theory} an estimate which is uniform over families of convex sets satisfying suitable uniform smoothness and curvature assumptions and which, in addition, keeps track of the effect of anisotropic dilations. The proof follows standard arguments based on Poisson summation and decay estimates for Fourier transforms of characteristic functions of convex sets; see, for instance, \cite[Section 7.7]{Hoermander2003}. The uniformity in the resulting estimate is essential for obtaining explicit control of the remainder in Theorem~\ref{thm: sc two-term Weyl intro} in terms of the shortest side length of the cuboid. This is particularly important for the application to shape optimization described in Section~\ref{sec: shape opt}, where the cuboid varies with the spectral parameter.

The proof of Theorem~\ref{thm: sc_inequality} also proceeds through lattice-point counting, but uses a different argument. We first dominate the Robin counting function by the number of points of a suitably shifted lattice lying in an ellipsoid, where the size of the shift tends to zero as the parameter $\beta$ tends to infinity. For sufficiently small shifts, this lattice count can be compared with the Dirichlet counting function at a slightly increased spectral cut-off. We then use a refined P\'olya-type bound for the Dirichlet counting function from~\cite{Gittins2017} to absorb the error introduced by increasing the cut-off. This yields the P\'olya-type inequality in Theorem~\ref{thm: sc_inequality} for all sufficiently large $\beta$.

\subsection*{Acknowledgment}

Financial support through the Swedish Research Council grant no.~2023-03985 (S.~L.) is acknowledged. M.~B.\ and S.~L.\ would both like to thank the Isaac Newton Institute for Mathematical Sciences, Cambridge, for support and hospitality during the programme `Geometric spectral theory and applications', where work on this paper was undertaken. The programme was supported by EPSRC grant EP/Z000580/1.

\section{On lattice point counts in convex sets}
\label{sec: Lattice point theory}

Counting lattice points in expanding convex sets is a classical problem with a long history. For general background on this topic, the reader is referred to~\cite{Gruber1987}. For sufficiently smooth convex sets with everywhere positive curvature, discrepancy estimates can be obtained by Fourier-analytic arguments based on the Poisson summation formula and the decay of the Fourier transform of the characteristic function of a convex set (see, for instance, \cite{Hlawka1950}). Considerably sharper discrepancy estimates have subsequently been obtained in various settings (see, for instance, \cite{Kraetzel1991, Kraetzel1992, Huxley2003}). 

For our purposes, the main issue is not obtaining the strongest possible discrepancy estimate, but rather obtaining sufficient uniformity in the underlying lattice-point problem. Uniform estimates under perturbations of the underlying convex set have previously been considered in \cite{Betke1999}, while lattice-point problems involving anisotropic dilations have appeared naturally in connection with spectral optimization \cite{Ariturk2017, Laugesen2018, Marshall2020, Lagace2020}. Particularly relevant to the present setting is \cite{Marshall2020}. Although the main results in \cite{Marshall2020} are formulated under additional restrictions, it is remarked there that the underlying lattice-point estimates remain valid without these restrictions and that they are uniform under perturbations of the underlying convex set. For completeness, we record below the corresponding estimate in the form required for our application and provide a proof following the classical Fourier-analytic approach.

To quantify the dependence on the anisotropic dilation, for a matrix $T\in \R^{d\times d}$ let $s_1(T)$ denote its smallest singular value, that is
\begin{equation*}
  s_1(T) = \min_{x \in \R^d\setminus \{0\}}\frac{|Tx|}{|x|}\, .
\end{equation*}
The estimate required for our application is the following.

\begin{proposition}\label{prop: Lattice point counting}
  Fix $d\geq 2$. There exists $m_d\in \N$ such that the following holds. Let $\mathcal{K}$ be a family of bounded convex subsets of $\R^d$ for which there exist constants $c_1, c_2, r_1, r_2>0$ such that:
  \begin{enumerate}
    \item the curvatures of $\partial E$ are bounded from below by $c_1$ for any $E \in \mathcal{K}$, 
    \item for any $E\in \mathcal{K}$ and $x \in \partial E$ the set $\partial E \cap B_{r_1}(x)$ can after rotation be parametrized as the graph of a function $g \in C^{m_d}(\R^{d-1})$ with
    \begin{equation*}
      \|g\|_{C^{m_d}(\R^{d-1})}\leq c_2\, , 
    \end{equation*}
    \item $B_{r_2}(0)\subset E$ for each $E \in \mathcal{K}$.
  \end{enumerate}

   Then there exists a constant $C_{\mathcal{K}}>0$ such that for all invertible matrices $T\in \R^{d\times d}$, $E \in \mathcal{K}$, and each $h \in (0, s_1(T)/2]$ it holds that
  \begin{equation*}
    \Bigl|h^d\#\{k \in (h \Z)^d \cap TE\} - |TE|\Bigr| \leq C_{\mathcal{K}}|TE| (h/s_1(T))^{2d/(d+1)}\, .
  \end{equation*}
\end{proposition}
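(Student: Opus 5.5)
Since the claimed bound is invariant under replacing $(h,T)$ by $(1,T/h)$ (both sides scale by the same power), I will reduce to the integer lattice: set $A=T/h$, so that $s_1(A)=s_1(T)/h\geq 2$, and the claim becomes
\begin{equation*}
  \bigl|\#(\Z^d\cap AE)-|AE|\bigr|\leq C_{\mathcal K}|AE|\,s_1(A)^{-2d/(d+1)}\, .
\end{equation*}
The plan is the classical smoothing-plus-Poisson-summation argument. Fix a nonnegative radial $\rho\in C_c^\infty(B_1(0))$ with $\int\rho=1$, put $\rho_\varepsilon=\varepsilon^{-d}\rho(\cdot/\varepsilon)$, and sandwich $\1_{AE}$ between $\1_{(AE)^-_\varepsilon}*\rho_\varepsilon$ and $\1_{(AE)^+_\varepsilon}*\rho_\varepsilon$, where $(AE)^\pm_\varepsilon$ are the outer and inner parallel sets at distance $\varepsilon$. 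To keep the geometry simple I will instead use the inner/outer dilations $(1\pm c\varepsilon/s_1(A))AE$: since $B_{r_2}\subset E$ gives $B_{r_2 s_1(A)}\subset AE$ with $AE$ convex, $(1+t)AE\supset AE+B_{t r_2 s_1(A)}$, so $t=\varepsilon/(r_2 s_1(A))$ suffices. This trades the parallel-set issue for a volume error $|AE|\cdot O(\varepsilon/s_1(A))$.

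Next I apply Poisson summation to $f=\1_{(1+t)AE}*\rho_\varepsilon$:
\begin{equation*}
  \sum_{k\in\Z^d}f(k)=\sum_{\xi\in\Z^d}\widehat{\1_{(1+t)AE}}(\xi)\,\hat\rho(\varepsilon\xi)\, .
\end{equation*}
The $\xi=0$ term gives $(1+t)^d|AE|$. For $\xi\neq0$ I change variables, $\widehat{\1_{AE}}(\xi)=|\det A|\,\widehat{\1_E}(A^{\mathsf T}\xi)$, and invoke the uniform decay estimate $|\widehat{\1_E}(\eta)|\leq C(1+|\eta|)^{-(d+1)/2}$ for $E\in\mathcal K$. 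This follows by the usual stationary phase argument (cf.\ \cite[Section 7.7]{Hoermander2003}), and its constant depends only on $c_1,c_2,r_1$ and the diameter bound implied by the curvature lower bound, precisely because hypotheses (1) and (2) provide uniform local graph charts with nondegenerate Hessian; $m_d$ is the number of derivatives needed there. Since $|A^{\mathsf T}\xi|\geq s_1(A)|\xi|$, the nonzero frequencies contribute at most
\begin{equation*}
  C|AE|\,s_1(A)^{-(d+1)/2}\sum_{\xi\neq0}|\xi|^{-(d+1)/2}|\hat\rho(\varepsilon\xi)|\lesssim |AE|\,s_1(A)^{-(d+1)/2}\varepsilon^{-(d-1)/2}\, ,
\end{equation*}
using the rapid decay of $\hat\rho$ to cut the sum at $|\xi|\sim\varepsilon^{-1}$.

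Balancing $\varepsilon/s_1$ against $s_1^{-(d+1)/2}\varepsilon^{-(d-1)/2}$ gives $\varepsilon=s_1^{-(d-1)/(d+1)}$, and both terms then equal $s_1^{-2d/(d+1)}$, as claimed. The condition $s_1\geq2$ guarantees $\varepsilon\leq1$ and $t\lesssim1$, so that $(1+t)^d-1\lesssim t$. The lower bound is obtained symmetrically with $(1-t)AE$.

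The main obstacle is the uniformity of the Fourier decay over $\mathcal K$: one has to check that the stationary-phase constants depend only on $c_1,c_2,r_1,r_2$. The key point is that no derivative bound on $AE$ itself is ever needed, since everything is pulled back to $E$ via $A^{\mathsf T}$. I would handle this with a uniform partition of unity subordinate to balls of radius $r_1/2$, whose number is bounded because the diameters of sets in $\mathcal K$ are bounded by $2/c_1$.
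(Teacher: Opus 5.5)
Your proposal is correct and is essentially the paper's argument: smoothing, Poisson summation, pulling the Fourier transform back to $E$ via $T^{\top}$, the uniform decay $|\xi|^{-(d+1)/2}$ combined with $|T^{\top}\xi|\geq s_1(T)|\xi|$, and balancing the two errors, with your $t=\varepsilon/(r_2 s_1(A))\sim s_1(A)^{-2d/(d+1)}$ playing exactly the role of the paper's $\delta=(h/s_1(T))^{2d/(d+1)}$. The only difference is cosmetic: the paper mollifies with the $T$-adapted kernel $\varphi(T^{-1}\cdot)$ at relative scale $\delta$, so the sandwich between $(1\pm\delta)$-dilations is automatic, whereas you use an isotropic kernel and the inscribed ball $B_{r_2 s_1(A)}\subset AE$; in the lower bound take $\varepsilon=c\,s_1(A)^{-(d-1)/(d+1)}$ with $c$ depending on $r_2$, so that $t\leq 1/2$ and $(1-t)AE$ does not degenerate.
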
  

The fact that the estimate remains effective up to the regime $h \sim s_1(T)$ will be important in our proof of Theorem~\ref{thm: sc two-term Weyl intro}, where it allows us to control the remainder even when the shortest side length of the cuboid is comparable to $1/\sqrt{\lambda}$.

For the proof of Proposition 2.1, we follow the classical Fourier-analytic argument leading to \cite[Theorem 7.7.16]{Hoermander2003}. The key input is the following uniform Fourier decay estimate. 

\begin{lemma}[\cite{Hoermander2003}]\label{lem: Fourier decay}
  Fix $d\geq 2$. There exists $m_d\in \N$ such that the following holds. Let $\mathcal{K}$ be a family of bounded convex subsets of $\R^d$ for which there exist constants $c_1, c_2, r>0$ such that:
  \begin{enumerate}
    \item the curvatures of $\partial E$ are bounded from below by $c_1$ for any $E \in \mathcal{K}$, 
    \item for any $E\in \mathcal{K}$ and $x \in \partial E$ the set $\partial E \cap B_r(x)$ can after rotation be parametrized as the graph of a function $g \in C^{m_d}(\R^{d-1})$ with
    \begin{equation*}
      \|g\|_{C^{m_d}(\R^{d-1})}\leq c_2\, .
    \end{equation*}
  \end{enumerate}
  Then there exists a constant $C_{\mathcal{K}}>0$ such that 
  \begin{equation*}
    |\widehat{\1_E}(\xi)|\leq C_{\mathcal{K}} |\xi|^{-(d+1)/2}
  \end{equation*}
  for all $E\in \mathcal{K}$ and $\xi \in \R^d \setminus B_1(0)$.
\end{lemma}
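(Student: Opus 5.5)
The plan is to follow the classical stationary-phase argument behind \cite[Theorem 7.7.16]{Hoermander2003}, keeping track of the constants so that they depend only on $d, c_1, c_2, r$. We write
\[
  \widehat{\1_E}(\xi)=\int_E e^{-ix\cdot\xi}\,dx .
\]
Fix $\xi$ with $|\xi|\geq 1$ and set $\omega=\xi/|\xi|$. By the divergence theorem applied to the vector field $e^{-ix\cdot\xi}\,i\xi/|\xi|^2$, we get
\[
  \widehat{\1_E}(\xi)=\frac{i}{|\xi|}\int_{\partial E}e^{-ix\cdot\xi}\,\omega\cdot\nu(x)\,d\Haus^{d-1}(x),
\]
where $\nu$ is the outer unit normal. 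It therefore suffices to show that this oscillatory surface integral is $O(|\xi|^{-(d-1)/2})$, uniformly over $\mathcal{K}$.

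\textbf{Step 1 (uniform partition of unity).} Assumption (1) gives uniform convexity, so the Gauss map is a diffeomorphism. Hence there are exactly two points $x_\pm\in\partial E$ where $\nu(x_\pm)=\pm\omega$; these are the only critical points of the phase $x\mapsto x\cdot\omega$ on $\partial E$. Cover $\partial E$ by three pieces:
\begin{itemize}
  \item[(a)] the cap $\partial E\cap B_{\rho}(x_+)$;
  \item[(b)] the cap $\partial E\cap B_{\rho}(x_-)$;
  \item[(c)] the remainder, where $|\nabla_{\partial E}(x\cdot\omega)|=\sqrt{1-(\omega\cdot\nu)^2}\geq\delta$.
\end{itemize}
Here $\rho\leq r/2$ and $\delta>0$ depend only on $c_1,c_2$. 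The lower bound $\delta$ holds because the Gauss map is bi-Lipschitz with constants controlled by $c_1$ and by the $C^2$-bound $c_2$.

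For the remainder piece (c), we cover it by boundedly many graph patches of radius $r$, with a partition of unity whose derivatives are controlled by $r$. The number of patches is controlled because $\Haus^{d-1}(\partial E)$ is bounded: the curvature lower bound gives $\operatorname{diam} E\leq 2/c_1$. On each patch we integrate by parts $N$ times using the nonvanishing tangential gradient, which gives $O(|\xi|^{-N})$ with constants depending on $N$ derivatives of $g$. This forces $m_d\geq N+2$ or so, with $N\geq (d-1)/2$.

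\textbf{Step 2 (caps).} Near $x_+$, rotate so that $\omega=e_d$ and $\partial E$ is the graph $x_d=g(x')$. The cap integral then becomes
\[
  \int e^{-i|\xi| g(x')}a(x')\,dx',
\]
where $g$ has a nondegenerate critical point whose Hessian is bounded below by $c_1$ and above by $c_2$. We apply the stationary phase estimate with explicit remainder, \cite[Theorem 7.7.5]{Hoermander2003}. It gives a bound
\[
  C\,|\xi|^{-(d-1)/2}\sum_{|\alpha|\leq k}\sup|\partial^\alpha a| ,
\]
with $C$ depending only on $C^{k}$-norms of $g$ and on the lower bound for $|g'(x')|/|x'-x_+'|$. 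That lower bound follows from $c_1$. Choosing $m_d$ large enough (e.g.\ $m_d=d+3$) makes all constants uniform. Combining Steps 1 and 2 gives the claimed $|\xi|^{-(d+1)/2}$ bound.

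\textbf{Main obstacle.} The delicate point is uniformity in the stationary-phase step. The caps must be defined on a fixed-size neighbourhood inside a single graph chart, and the Morse-lemma change of variables (or a direct Hessian-based integration-by-parts argument) must have constants controlled only by $c_1$ and $\|g\|_{C^{m_d}}$. I would handle this by avoiding the Morse lemma entirely. Instead, use the standard bound from \cite[Theorem 7.7.5]{Hoermander2003}, whose constant depends only on finitely many derivatives of the phase and on a lower bound for $|\nabla g(x')|/|x'-x'_0|$. This lower bound is at least $c_1$ on a ball of radius comparable to $c_1/c_2$, by Taylor expansion.
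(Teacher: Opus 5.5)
Your proposal is correct and takes essentially the paper's approach. The paper gives no separate argument and defers to the proof of H\"ormander's Lemma 7.7.15, noting that uniformity of the constants is bookkeeping. Your outline (Gauss--Green reduction to an oscillatory integral over $\partial E$, non-stationary phase away from the two points with normal $\pm\omega$, Theorem 7.7.5 near them, all constants controlled by $c_1,c_2,r$) is that classical argument with the bookkeeping made explicit.

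One small slip: Theorem 7.7.5 as stated has remainder $O(\omega^{-k})$ and needs a $C^{3k+1}$ phase. So you need $k\geq (d-1)/2$, hence $m_d\geq 3\lceil (d-1)/2\rceil+1$, which already exceeds your sample value $m_d=d+3$ when $d=6$. Since the lemma only asserts the existence of some $m_d$, this is harmless.
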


Lemma \ref{lem: Fourier decay} follows from the proof of \cite[Lemma 7.7.15]{Hoermander2003}. The number of derivatives required for the estimate, and the dependence of the constant on these derivatives, are not explicitly tracked there. Tracking this dependence is merely a bookkeeping exercise, and for our application it suffices to know that uniform control of some finite number of derivatives is enough.

\begin{proof}[Proof of Proposition \ref{prop: Lattice point counting}]
  We follow closely the argument in \cite[Theorem 7.7.16]{Hoermander2003}.

  Fix a non-negative function $\varphi \in C_c^\infty(\R^d)$ with $\supp \varphi \subset B_{r_2/2}(0)$ and $\int_{\R^d}\varphi(x)\, dx=1$. For $\delta\in (0, 1)$, $E\in \mathcal{K}$, and invertible $T \in \R^{d\times d}$ we define
  \begin{equation*}
    \chi_{\delta, TE}(x) = |{\det(T)}|^{-1}\int_{\R^d} \varphi(T^{-1}y)\1_{TE}(x-\delta y)\, dy\, .
  \end{equation*}
  Since by assumption $\supp \varphi(T^{-1}\cdot) \subset TB_{r_2/2}(0)\subset (TE\cap (-TE))/2$ and $TE$ is convex it follows that
  \begin{equation*}
    \chi_{\delta, TE}((1-\delta)^{-1}x)\leq \1_{TE}(x) \leq \chi_{\delta, TE}((1+\delta)^{-1}x) \quad \mbox{for all }x\in \R^d\, .
  \end{equation*}
  In particular, for any $h>0$, 
  \begin{align*}
    \sum_{k \in (h\Z)^d} \chi_{\delta, TE}((1-\delta)^{-1}k) \leq \#\{k \in (h\Z)^d\cap TE\} \leq \sum_{k \in (h\Z)^d} \chi_{\delta, TE}((1+\delta)^{-1}k)\, .
  \end{align*}

  By the Poisson summation formula and basic properties of the Fourier transform, 
  \begin{align*}
    \sum_{k \in (h\Z)^d}& \chi_{\delta, TE}((1\pm \delta)^{-1}k)\\
    &=
    h^{-d}(1\pm \delta)^{d}\sum_{\xi \in \Z^d} \widehat{\chi_{\delta, TE}}((1\pm\delta)h^{-1}\xi)\\
    &=
    h^{-d}(1\pm \delta)^{d}|{\det(T)}|\sum_{\xi \in \Z^d} \widehat{\1_{E}}((1\pm\delta)h^{-1}T^{\top}\xi)\widehat{\varphi}((1\pm\delta)\delta h^{-1}T^{\top}\xi)\\
    &=
    h^{-d}(1\pm \delta)^{d}|TE|\\
    &\quad +
    h^{-d}(1\pm \delta)^{d}|{\det(T)}|\sum_{\xi \in \Z^d\setminus\{0\}} \widehat{\1_{E}}((1\pm\delta)h^{-1}T^{\top}\xi)\widehat{\varphi}((1\pm\delta)\delta h^{-1}T^{\top}\xi)\, .
  \end{align*}

  Lemma \ref{lem: Fourier decay} implies that if $|(1\pm \delta)h^{-1}T^{\top}\xi|\geq 1$ then 
  \begin{equation*}
    |\widehat{\1_{E}}((1\pm\delta)h^{-1}T^{\top}\xi)|\lesssim_{\mathcal{K}}(1\pm \delta)^{-(d+1)/2}h^{(d+1)/2}|T^{\top}\xi|^{-(d+1)/2}\, .
  \end{equation*}
  Similarly, the fact that $\varphi\in C_c^\infty(\R^d)$ implies that for any $N>0$
  \begin{equation*}
    |\widehat{\varphi}((1\pm\delta)\delta h^{-1}T^{\top}\xi)|\lesssim_N
     (1+(1\pm\delta)\delta h^{-1}|T^{\top}\xi|)^{-N}
  \end{equation*}
  for all $\xi \in \R^d$. Assuming that $\delta \leq 1/2$ and using the fact that $|T^\top \xi|\geq s_1(T^\top)|\xi|= s_1(T)|\xi|$ for all $\xi \in \R^d$ by the definition of the singular values, we conclude that
  \begin{align*}
    \biggl|\sum_{\xi \in \Z^d\setminus\{0\}}& \widehat{\1_{E}}((1\pm\delta)h^{-1}T^{\top}\xi)\widehat{\varphi}((1\pm\delta)\delta h^{-1}T^{\top}\xi)\biggr|\\
    &
    \lesssim_{d, N, \mathcal{K}}
     \delta^{(d+1)/2}\sum_{\xi \in \Z^d\setminus\{0\}} (s_1(T)\delta h^{-1}|\xi|)^{-(d+1)/2}(1+s_1(T)\delta h^{-1}|\xi|)^{-N}\, , 
  \end{align*}
  for all $h \in (0, s_1(T)/2]$.
  We bound the remaining sum by splitting it according to the size of $s_1(T)\delta h^{-1}|\xi|$ and comparing the resulting sums to the corresponding integrals
  \begin{align*}
    \sum_{\xi \in \Z^d\setminus\{0\}} &(s_1(T)\delta h^{-1}|\xi|)^{-(d+1)/2}(1+s_1(T)\delta h^{-1}|\xi|)^{-N}\\
    &=
    \sum_{\xi \in (\Z^d \cap B_{h/(s_1(T)\delta)}(0))\setminus\{0\}} (s_1(T)\delta h^{-1}|\xi|)^{-(d+1)/2}(1+s_1(T)\delta h^{-1}|\xi|)^{-N}\\
    &\quad +
    \sum_{\xi \in \Z^d \setminus B_{h/(s_1(T)\delta)}(0)} (s_1(T)\delta h^{-1}|\xi|)^{-(d+1)/2}(1+s_1(T)\delta h^{-1}|\xi|)^{-N}\\
    &\leq
    s_1(T)^{-(d+1)/2}\delta^{-(d+1)/2}h^{(d+1)/2}\sum_{\xi \in \Z^d \cap B_{h/(s_1(T)\delta)}(0)\setminus\{0\}} |\xi|^{-(d+1)/2}\\
    &\quad +
    s_1(T)^{-(d+1)/2-N}\delta^{-(d+1)/2-N}h^{(d+1)/2+N}\sum_{\xi \in \Z^d \setminus B_{h/(s_1(T)\delta)}(0)} |\xi|^{-(d+1)/2-N}\\
    &\lesssim_{d, N}
    s_1(T)^{-d}\delta^{-d}h^{d}\, , 
  \end{align*}
  assuming that $N>(d-1)/2$. We may now choose $N= (d+1)/2$.

  Gathering the estimates above we have shown that for all $\delta \in (0, 1/2], h\in (0, s_1(T)/2]$, each $E \in \mathcal{K}$, and invertible $T \in \R^{d\times d}$
  \begin{align*}
    \Bigl|h^d\#\{k \in (h\Z)^d\cap TE\} - |TE|\Bigr|
    &\lesssim_{d, \mathcal{K}}
    ((1+\delta)^d-(1-\delta)^d)|{\det (T)}||E|\\
    &\quad + |{\det(T)}| s_1(T)^{-d}h^d\delta^{-(d-1)/2}\\
    &\lesssim_{d, \mathcal{K}}
    \delta|TE|\\
    &\quad + |{\det(T)}| s_1(T)^{-d}h^d\delta^{-(d-1)/2}\, .
  \end{align*}
  Choosing $\delta=(h/s_1(T))^{2d/(d+1)}$, which is less than $1/2$ for all $h\leq s_1(T)/2$ since $2d/(d+1)\geq 1$, and using the facts that $|TE|=|\det(T)||E|$ and $|E|\gtrsim_\mathcal{K} 1$ for all $E\in \mathcal{K}$ by the assumptions, we conclude that
  \begin{align*}
    \Bigl|h^d\#\{k \in (h\Z)^d\cap TE\} - |TE|\Bigr|
    &\lesssim_{d, \mathcal{K}}
    |TE|(h/s_1(T))^{2d/(d+1)}\, , 
  \end{align*}
  which completes the proof of Proposition \ref{prop: Lattice point counting}.
\end{proof}

\section{Asymptotics for the Robin counting function}

The goal of this section is to prove Theorem \ref{thm: sc two-term Weyl intro}.

\subsection{Estimating the spectral counting function by lattice point counts}

The first step of the proof is based on the following two-sided bounds for eigenvalues of the Robin Laplacian on an interval.

\begin{lemma} \label{lem: arctan bounds lambdak}
  For each $k\geq 1$ and any $\beta>0$, 
  \begin{equation*}
    \Bigl(\pi k -2\arctan\Bigl(\frac{\pi k}{\beta}\Bigr)\Bigr)^2 < \lambda_k(-\Delta_{(0, 1)}^\beta)< \Bigl(\pi k- 2\arctan\Bigr(\frac{\pi k}{\beta}- \frac{2}{\beta}\arctan\Bigl(\frac{\pi k}{\beta}\Bigr)\Bigr)\Bigr)^2\, .
  \end{equation*}
\end{lemma}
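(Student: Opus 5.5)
The plan is to compute the one-dimensional Robin eigenvalues explicitly through a transcendental equation, rewrite that equation as a fixed-point equation with a decreasing right-hand side, and then iterate it twice starting from the trivial bound $\sqrt{\lambda_k}<\pi k$.

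\emph{Step 1: the eigenvalue equation.} For $\beta>0$ the form is positive, so every eigenvalue is positive; write $\lambda=\mu^2$ with $\mu>0$. The Robin condition reads $-u'(0)+\beta u(0)=0$ and $u'(1)+\beta u(1)=0$. Write the eigenfunction as $u(x)=\cos(\mu x-\theta)$. Then $u'(0)=\mu\sin\theta$, so the condition at $0$ becomes $\tan\theta=\beta/\mu$, and we may take $\theta=\arctan(\beta/\mu)\in(0,\pi/2)$. The problem is invariant under $x\mapsto 1-x$, so the condition at $1$ is the mirror image of the one at $0$: we need $u(1-x)=\pm u(x)$, which forces $\mu-2\theta\in\pi\Z$.

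Since $\mu-2\theta>-\pi$, the admissible frequencies are the solutions of $\mu=\pi(k-1)+2\arctan(\beta/\mu)$, $k\ge1$. Using $\arctan(\beta/\mu)=\pi/2-\arctan(\mu/\beta)$, this is
\begin{equation*}
  \mu = F_k(\mu) := \pi k-2\arctan(\mu/\beta)\, .
\end{equation*}
The map $\mu\mapsto \mu-F_k(\mu)$ is strictly increasing on $(0,\infty)$. It is negative at $\mu=\pi(k-1)$ (for $k=1$ take the limit $\mu\to0^+$) and positive at $\mu=\pi k$. Hence for each $k$ there is exactly one solution $\mu_k\in(\pi(k-1),\pi k)$. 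Eigenvalues are simple in one dimension, so $\lambda_k(-\Delta^\beta_{(0,1)})=\mu_k^2$ with this labelling.

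\emph{Step 2: two iterations of the fixed-point map.} From $\mu_k<\pi k$ and the monotonicity of $\arctan$ we get $\arctan(\mu_k/\beta)<\arctan(\pi k/\beta)$. Therefore
\begin{equation*}
  \mu_k=F_k(\mu_k)>a_k:=\pi k-2\arctan(\pi k/\beta)\, .
\end{equation*}
Since $F_k$ is strictly decreasing, $\mu_k>a_k$ gives
\begin{equation*}
  \mu_k=F_k(\mu_k)<F_k(a_k)=\pi k-2\arctan\Bigl(\frac{\pi k}{\beta}-\frac{2}{\beta}\arctan\Bigl(\frac{\pi k}{\beta}\Bigr)\Bigr)\, ,
\end{equation*}
which is exactly the expression in the upper bound.

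\emph{Step 3: squaring.} We have $a_k>\pi k-\pi\ge 0$, so all three quantities $a_k<\mu_k<F_k(a_k)$ are positive. Squaring therefore preserves the strict inequalities and yields the lemma.

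The only step needing care is Step 1: identifying the $k$-th eigenvalue with the unique root in $(\pi(k-1),\pi k)$, and in particular handling the symmetric/antisymmetric alternation and the case $k=1$. The rest is monotonicity of $\arctan$.
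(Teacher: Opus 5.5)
Your proposal is correct and follows essentially the paper's own argument. The paper sets $\delta_k=\pi k-\sqrt{\lambda_k}$ and takes the implicit equation $\delta_k=2\arctan\bigl((\pi k-\delta_k)/\beta\bigr)$, with $\delta_k\in(0,\pi)$, from \cite[Lemma~2.3]{Baur2026}; it then iterates this equation twice using monotonicity of $\arctan$, which is your Step 2 written in terms of $\delta_k$ rather than $\mu_k$. The only difference is that you derive the secular equation and the localization $\mu_k\in(\pi(k-1),\pi k)$ yourself via the reflection symmetry, whereas the paper imports them from that reference.
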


\begin{proof}
  Define $\delta_k(\beta):=\pi k - \sqrt{\lambda_k(-\Delta_{(0, 1)}^\beta)}\in (0, \pi)$. As argued in the proof of \cite[Lemma 2.3]{Baur2026}, we have $\delta_k(\beta) \in (0, \pi)$ for any $k$ and any $\beta>0$ and $\delta_k(\beta)$ satisfies the implicit equation 
  \begin{align*}
    \delta_k(\beta) = 2\arctan\Bigl(\frac{\pi k-\delta_k(\beta)}{\beta}\Bigr)\, .
  \end{align*}
  By monotonicity of $x \mapsto \arctan(x)$ and iteration of the implicit equation, we obtain
  \begin{align*}
   2\arctan\Bigl(\frac{\pi k}{\beta} - \frac{2}{\beta }\arctan\Bigl(\frac{\pi k}{\beta}\Bigr)\Bigr) < \delta_k(\beta) < 2\arctan\Bigl(\frac{\pi k}{\beta}\Bigr)\, 
  \end{align*}
  from which the claimed inequalities follow.  
\end{proof}

For each $k\geq 1$ and $\beta>0$, the fact that $\arctan(y)\in (0, \pi/2)$ for $y>0$ implies that
\begin{equation*}
  \pi^2(k-1)^2<\Bigl(\pi k -2\arctan\Bigl(\frac{\pi k}{\beta}\Bigr)\Bigr)^2 \leq \Bigl(\pi k- 2\arctan\Bigr(\frac{\pi k}{\beta}- \frac{2}{\beta}\arctan\Bigl(\frac{\pi k}{\beta}\Bigr)\Bigr)\Bigr)^2 <\pi^2k^2\, .
\end{equation*}
In other words, the inequalities provided by Lemma~\ref{lem: arctan bounds lambdak} improve upon the well-known Dirichlet-Neumann bracket 
\begin{equation}\label{eq: DN bracketing 1D}
  \lambda_k(-\Delta_{(0, 1)}^{\rm N})<\lambda_k(-\Delta_{(0, 1)}^\beta)<\lambda_k(-\Delta_{(0, 1)}^{\rm D})\, .
\end{equation}

For $\beta>0, \lambda\geq 0$, and a cuboid $R\subset \R^d$ with side lengths $l_1, \ldots, l_d>0$, define
\begin{align*}
  N^-_{\beta, R}(\lambda) &= \#\Biggl\{k \in \N^d: \sum_{i=1}^d \frac{1}{l_i^2}\Bigl(\pi k_i -2\arctan\Bigl(\frac{\pi k_i}{\beta \sqrt{\lambda}l_i}\Bigr)\Bigr)^2\leq \lambda\Biggr\}\, , \\
  N^+_{\beta, R}(\lambda) &= \#\Biggl\{k \in \N^d: \sum_{i=1}^d \frac{1}{l_i^2}\Bigl(\pi k_i -2\arctan\Bigl(\frac{\pi k_i}{\beta \sqrt{\lambda}l_i}-\frac{2}{\beta \sqrt{\lambda}l_i}\arctan\Bigl(\frac{\pi k_i}{\beta \sqrt{\lambda}l_i}\Bigr)\Bigr)\Bigr)^2\leq \lambda\Biggr\}\, .
\end{align*}
Lemma \ref{lem: arctan bounds lambdak} combined with \eqref{eq: DN bracketing 1D}, separation of variables, and the behavior of the Robin Laplacian eigenvalues under scaling yields the bounds
\begin{equation}\label{eq: DN bracketing counting}
  N(-\Delta_R^{\rm D}, \lambda) \leq N^+_{\beta, R}(\lambda) \leq N(-\Delta_R^{\beta\sqrt{\lambda}}, \lambda)\leq N^-_{\beta, R}(\lambda) \leq N(-\Delta_R^{\rm N}, \lambda)
\end{equation}
for all $\beta> 0, \lambda \geq 0$, and cuboids $R\subset \R^d$. In particular, \eqref{eq: DN bracketing counting} shows that if we can prove matching two-term asymptotic expansions for $N_{\beta, R}^\pm(\lambda)$ then the Robin counting function must satisfy the same asymptotics. Furthermore, the comparison of $N^\pm_{\beta, R}$ with the Dirichlet and Neumann counting functions yields the following useful a priori bounds that capture the correct leading-order asymptotic behavior. 
\begin{lemma}\label{lem: first order asymptotitcs}
  Fix $d\geq 1$. There exists a constant $C_d>0$ so that for any $\beta>0, \lambda> 0$, and cuboid $R \subset \R^d$ with side lengths $l_1, \ldots, l_d>0$, 
  \begin{equation*}
    \Bigl|N^{\pm}_{\beta, R}(\lambda) - L_{0, d}^{\rm sc}|R|\lambda^{d/2}\Bigr| \leq C_d \Haus^{d-1}(\partial R) \lambda^{(d-1)/2}(1+(\min_i l_i \sqrt{\lambda})^{1-d})\, . 
  \end{equation*}
\end{lemma}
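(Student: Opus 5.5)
By \eqref{eq: DN bracketing counting}, both $N^\pm_{\beta,R}(\lambda)$ lie between $N(-\Delta_R^{\rm D},\lambda)$ and $N(-\Delta_R^{\rm N},\lambda)$. It therefore suffices to prove the claimed bound, with $N^\pm_{\beta,R}$ replaced, for the Dirichlet and Neumann counting functions. These are explicit lattice-point counts:
\begin{equation*}
N(-\Delta_R^{\rm D},\lambda)=\#\Bigl\{k\in\N^d:\sum_{i=1}^d \pi^2k_i^2/l_i^2\le\lambda\Bigr\},
\end{equation*}
and $N(-\Delta_R^{\rm N},\lambda)$ is the same count over $(\N\cup\{0\})^d$. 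Write $E=\{x\in\R^d:\sum_i x_i^2/l_i^2\le \lambda/\pi^2\}$, an ellipsoid with semi-axes $a_i=l_i\sqrt\lambda/\pi$. The Dirichlet count is the number of points of $\Z^d\cap E$ with all coordinates positive, and the Neumann count is the number with all coordinates non-negative. The difference between them is the number of points in $E\cap(\N\cup\{0\})^d$ with at least one zero coordinate.

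I would bound the Neumann count from above and the Dirichlet count from below by the same mechanism, namely comparing lattice points with unit cubes. Each $k\in\N^d$ with $k\in E$ gives a unit cube $\prod_i(k_i-1,k_i]$ contained in $E\cap[0,\infty)^d$, since $E$ is symmetric and coordinate-wise monotone. Hence the Dirichlet count is at most $|E\cap[0,\infty)^d|=2^{-d}|E|=L^{\rm sc}_{0,d}|R|\lambda^{d/2}$, which is P\'olya's bound. Similarly, each $k\in(\N\cup\{0\})^d\cap E$ gives the cube $\prod_i[k_i,k_i+1)$, whose union covers $E\cap[0,\infty)^d$. This gives a Neumann lower bound equal to the Weyl term. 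These are the easy directions.

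For the opposite directions, I would note that the Neumann count minus the Dirichlet count equals the number of lattice points in the union of the coordinate faces $\{k_j=0\}$. This number is at most $\sum_j \#\bigl((\N\cup\{0\})^{d-1}\cap E_j\bigr)$, where $E_j$ is the $(d-1)$-dimensional ellipsoid with semi-axes $a_i$, $i\ne j$. Each lower-dimensional count is bounded by $\prod_{i\ne j}(a_i+1)$. Expanding the product gives terms $\prod_{i\in S}a_i$ for $S\subset\{1,\dots,d\}\setminus\{j\}$. For nonempty $S$ with $|S|=m$, each such term is at most $\lambda^{(d-1)/2}\prod_{i\ne j}l_i\,(\min_i l_i\sqrt\lambda)^{m-(d-1)}$, and the factor $(\min_i l_i\sqrt\lambda)^{m-(d-1)}$ is bounded by $1+(\min_i l_i\sqrt\lambda)^{1-d}$. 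The constant term, $S=\emptyset$, equals $\lambda^{(d-1)/2}\prod_{i\ne j}l_i\cdot(\min_i l_i\sqrt\lambda)^{1-d}$ times something $\le1$, so it is controlled the same way. Since $\sum_j\prod_{i\ne j}l_i\sim\Haus^{d-1}(\partial R)$, the difference is bounded by $C_d\Haus^{d-1}(\partial R)\lambda^{(d-1)/2}(1+(\min_i l_i\sqrt\lambda)^{1-d})$.

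Combining these, $0\le \text{Neumann}-\text{Weyl}\le\text{Neumann}-\text{Dirichlet}$, and likewise $0\le \text{Weyl}-\text{Dirichlet}\le\text{Neumann}-\text{Dirichlet}$. Both deviations are therefore bounded by the face count, and the lemma follows for $N^\pm_{\beta,R}$. The case $d=1$ is immediate from the same argument, where the face count is $1$. The main obstacle is the bookkeeping in the face count, where the lattice-point count in a thin ellipsoid must be bounded by products $\prod(a_i+1)$ uniformly in all side lengths. This is exactly what produces the term $(\min_i l_i\sqrt\lambda)^{1-d}$.
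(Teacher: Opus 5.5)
Your proof is correct and follows the same route as the paper. The paper also reduces to the Dirichlet and Neumann counting functions via \eqref{eq: DN bracketing counting}, but then simply cites \cite[Lemma 3.4]{Baur2026} for the two-sided bound on those. Your P\'olya cube packing/covering argument, together with the face-count estimate $\prod_{i\neq j}(a_i+1)$ and $\sum_j\prod_{i\neq j}l_i=\tfrac12\Haus^{d-1}(\partial R)$, supplies a correct self-contained proof of that cited ingredient.
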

\begin{proof}
  The claimed bound follows directly from \eqref{eq: DN bracketing counting} combined with \cite[Lemma 3.4]{Baur2026}.
\end{proof}

\subsection{On the asymptotic behavior of \texorpdfstring{$N^\pm_{\beta, R}$}{the lattice points counting functions}}
Our aim in this section is to show that the asymptotic behavior of $N^\pm_{\beta, R}$ can be understood through the results in Section \ref{sec: Lattice point theory}. The first step is to make a change of variables so that the lattice point counts defining $N^\pm_{\beta, R}$ are phrased in the language used in Section \ref{sec: Lattice point theory}.

For a cuboid $R \subset \R^d$ with side lengths $l_1, \ldots, l_d>0$ we define the matrix $T_R= \mathrm{diag}(l_1, \ldots, l_d)$. Furthermore, we define for $x\in \R^d, \beta>0$, and $\delta_1, \ldots , \delta_d\geq 0$, the functions
\begin{align*}
  q_{\beta, \delta_1, \ldots, \delta_d}^-(x) &= \sum_{i=1}^d\Bigl(\pi x_i- 2\delta_i\arctan\Bigl(\frac{\pi x_i}{\beta}\Bigr)\Bigr)^2\\
  q_{\beta, \delta_1, \ldots, \delta_d}^+(x) &= \sum_{i=1}^d\Bigl(\pi x_i- 2\delta_i\arctan\Bigl(\frac{\pi x_i}{\beta}- \frac{2\delta_i}{\beta}\arctan\Bigr(\frac{\pi x_i}{\beta}\Bigr)\Bigr)\Bigr)^2\, .
\end{align*}
Note that the functions $q_{\beta, \delta_1, \ldots, \delta_d}^\pm$ are invariant under reflections in the coordinate hyperplanes.

With these definitions in hand we can express our approximate counting functions as
\begin{align*}
  N_{\beta, R}^\pm(h^{-2}) &= \# \{ k \in (h\mathbb{N})^d: q_{\beta, h/l_1, \ldots, h/l_d}^\pm(T_R^{-1}k) \leq 1 \}\, .
\end{align*}
In particular, if we define
\begin{equation*}
  E^\pm_{\beta, \delta_1, \ldots, \delta_d} = \{x\in \R^d: q^\pm_{\beta, \delta_1, \ldots, \delta_d}(x)\leq 1\}
\end{equation*}
then
\begin{equation*}
  N_{\beta, R}^\pm(h^{-2}) = \# \{ k \in (h\mathbb{N})^d \cap T_R E^\pm_{\beta, h/l_1, \ldots, h/l_d}\}\, .
\end{equation*}
Essentially, the geometry of the cuboid is encoded by the anisotropic dilation $T_R$, while the deviation of the Robin eigenvalues from the Dirichlet ones is encoded by how the set $E_{\beta, h/l_1, \ldots, h/l_d}^\pm$ differs from a ball.

We now apply Proposition~\ref{prop: Lattice point counting} to obtain precise asymptotics for these lattice point counting functions. We begin by proving the following lemma.

\begin{lemma}\label{lem: asymptotics in terms of Epm measure}
  Fix $d\geq 2, \beta_0>0$. There exist constants $\delta, C >0$ such that if $\beta \geq \beta_0$, $R\subset \R^d$ is a cuboid with side lengths $l_1, \ldots, l_d>0$, and $0<h \leq \delta \min_i l_i$ then
  \begin{align*}
    \biggl|N^\pm_{\beta, R}(h^{-2})-\frac{1}{2^d}|R||E^\pm_{\beta, h/l_1, \ldots, h/l_d}|h^{-d} + &\frac{1}{4}L_{0, d-1}^{\rm sc}\Haus^{d-1}(\partial R)h^{-d+1}\biggr| \\
    &\leq C |R|h^{-d}\Bigl(\frac{h}{\min_i l_i}\Bigr)^{2d/(d+1)}\, .
  \end{align*}
\end{lemma}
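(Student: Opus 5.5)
We will reduce the count over $(h\N)^d$ to a count over the full lattice $(h\Z)^d$, to which Proposition~\ref{prop: Lattice point counting} applies with $T=T_R$. Since $q^\pm_{\beta,\delta_1,\ldots,\delta_d}$ is invariant under reflections in the coordinate hyperplanes, the set $T_RE^\pm_{\beta,h/l_1,\ldots,h/l_d}$ is symmetric under all sign changes of coordinates. By inclusion--exclusion over the coordinate hyperplanes, the full count
$\#\{k\in (h\Z)^d\cap T_RE^\pm\}$ equals
\[
\sum_{J\subseteq\{1,\ldots,d\}} 2^{d-|J|}\,\#\{k\in (h\N)^{d-|J|}\cap (T_RE^\pm)\cap\{x_j=0,\ j\in J\}\}.
\]
The term $J=\emptyset$ is $2^dN^\pm_{\beta,R}(h^{-2})$. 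We can solve for this term, which gives $2^{-d}$ times the full count minus $2^{-1}\sum_{|J|=1}$(half-lattice counts in coordinate slices) plus terms with $|J|\geq 2$. Each slice $\{x_j=0\}\cap E^\pm_{\beta,\ldots}$ is the analogous set in dimension $d-1$ (the $j$-th summand vanishes at $x_j=0$), so the slice count is exactly $N^\pm_{\beta,R_j}(h^{-2})$ with $R_j$ the $(d-1)$-dimensional face cuboid. Lemma~\ref{lem: first order asymptotitcs} (in dimension $d-1$) gives
\[
N^\pm_{\beta,R_j}(h^{-2})=L_{0,d-1}^{\rm sc}\frac{|R|}{l_j}h^{-(d-1)}+O\Bigl(\Haus^{d-2}(\partial R_j)h^{-(d-2)}\bigl(1+(h/\min_i l_i)^{d-2}\bigr)\Bigr).
\]
Summing over $j$ with weight $2^{-1}$ yields exactly $\tfrac14 L^{\rm sc}_{0,d-1}\Haus^{d-1}(\partial R)h^{-d+1}$, because $\Haus^{d-1}(\partial R)=2\sum_j |R|/l_j$. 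For $d=2$ the slices are intervals and can be counted directly. The $|J|\geq2$ terms are bounded crudely by the same lemma by $|R|h^{-d}(h/\min l_i)^2$. All these errors are $\lesssim |R|h^{-d}(h/\min_i l_i)^{2}$, which is dominated by the claimed bound since $2\geq 2d/(d+1)$ and $h/\min l_i\leq\delta<1$.

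It remains to treat the full lattice count with Proposition~\ref{prop: Lattice point counting}. Here $s_1(T_R)=\min_i l_i$, so the condition $h\leq s_1/2$ holds once $\delta\leq1/2$. The family is $\mathcal K=\{E^\pm_{\beta,\delta_1,\ldots,\delta_d}:\beta\geq\beta_0,\ \delta_i\in[0,\delta]\}$, and the Proposition then gives the stated error $C|R|h^{-d}(h/\min l_i)^{2d/(d+1)}$, using $|T_RE^\pm|=|R||E^\pm|$.

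The main obstacle is verifying that $\mathcal K$ satisfies the uniform hypotheses (1)--(3) of the Proposition. The plan is to write $q^\pm(x)=\sum_i f_i(x_i)^2$, where $f_i(t)=\pi t-2\delta_i\phi_\beta(t)$ and $\phi_\beta$ is an odd function built from $\arctan$. The derivatives $\phi_\beta^{(m)}(t)$ are bounded by $C_m\beta^{-m}\leq C_m\beta_0^{-m}$ uniformly in $\beta\geq\beta_0$, since $\phi_\beta(t)=\Phi(t/\beta)$ with $\Phi$ smooth and having bounded derivatives. Hence $f_i$ is a $C^{m_d}$ perturbation of $\pi t$ of size $O(\delta)$ uniformly, and $q^\pm$ is a $C^{m_d}$-small perturbation of $\pi^2|x|^2$ on a neighbourhood of the unit-scale ball. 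Choosing $\delta$ small then gives the following:
\begin{itemize}
\item $\nabla q^\pm\neq0$ on $\{q^\pm=1\}$, so the boundary is a smooth hypersurface; the implicit function theorem gives uniform local graph charts of uniform radius with uniform $C^{m_d}$ bounds, which is hypothesis (2).
\item The Hessian of $q^\pm$ is close to $2\pi^2 I$, so $E^\pm$ is convex with curvatures close to $\pi$, giving a uniform lower bound for hypothesis (1).
\item $E^\pm$ is close to the ball of radius $1/\pi$, so it contains $B_{1/(2\pi)}(0)$, which is hypothesis (3).
\end{itemize}
Convexity of the sublevel set follows from the positive definite Hessian, since $q^\pm$ is then locally strictly convex on the relevant region and the sublevel set is star-shaped and close to a ball.
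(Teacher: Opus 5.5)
Your proposal is correct and follows the paper's proof essentially step by step. Both use the same inclusion--exclusion over the coordinate hyperplanes, with the slice counts identified as $N^\pm_{\beta,R_J}(h^{-2})$ and controlled by Lemma~\ref{lem: first order asymptotitcs}, followed by Proposition~\ref{prop: Lattice point counting} with $T=T_R$ and $s_1(T_R)=\min_i l_i$ on the family $\mathcal K$. The paper justifies the hypotheses on $\mathcal K$ only by the uniform local $C^m$-convergence $q^\pm\to\pi^2|x|^2$, which you spell out in more detail. Two small points should be made explicit. First, in the $q^+$ case the inner arctan function depends on $\delta_i/\beta$ as well as on $t/\beta$; its derivative bounds are nevertheless still uniform. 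Second, you need the uniform bound on $|E^\pm|$ over $\mathcal K$, which follows from your closeness-to-the-ball observation, to replace $|T_RE^\pm|$ by $|R|$ in the error term.
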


\begin{proof}
  Note that, in contrast to the counting functions featured in Proposition \ref{prop: Lattice point counting}, the functions $N^\pm_{\beta, R}$ count only lattice points with positive coordinates. However, since the sets $T_RE^\pm_{\beta, \delta_1, \ldots, \delta_d}$ are axis-symmetric this is not difficult to deal with. Indeed, the axis symmetry of these sets implies that the number of lattice points in the entire set can be related to the number in the positive orthant by compensating by appropriate counts on the coordinate hyperplanes. To this end, for $I \subsetneq \{1, \ldots, d\}$, let $R_I=\prod_{i \notin I}(0, l_i)$, i.e., $R_I$ is the $(d-|I|)$-dimensional cuboid obtained by removing the directions indexed by $I$. Then we can express the functions $N_{\beta, R}^\pm$ as 
  \begin{align}
  \begin{split}
  N_{\beta, R}^\pm(h^{-2})
    &=
    2^{-d}\#\{x \in (h\Z)^d \cap T_RE^\pm_{\beta, h/l_1, \ldots, h/l_d}\}\\
    &\quad -\sum_{n=1}^{d-1} 2^{-n}\sum_{1\leq i_1 <\ldots <i_n \leq d} N^\pm_{\beta, R_{\{i_1, \ldots, i_n\}}}(h^{-2})-2^{-d}\, . \label{eq:countingfunction_puzzled_together}
  \end{split}
  \end{align}
  
 Let us consider first the coordinate-hyperplane corrections in the second line of \eqref{eq:countingfunction_puzzled_together}. By Lemma~\ref{lem: first order asymptotitcs}, among the coordinate-hyperplane corrections, only the leading-order contributions from the terms with $n=1$ matter for our purposes. Indeed, 
  \begin{align*}
    \biggl|\sum_{n=1}^{d-1} 2^{-n}&\sum_{1\leq i_1 <\ldots <i_n \leq d} N^\pm_{\beta, R_{\{i_1, \ldots, i_n\}}}(h^{-2})+2^{-d}
    -\frac{1}{2} L_{0, d-1}^{\rm sc} h^{-d+1}\sum_{i=1}^d \frac{|R|}{l_i}\biggr|\\
    &\lesssim_{d}
    \sum_{i=1}^d\frac{|R|}{l_i \min_{j\neq i}l_j}h^{-d+2}+\sum_{n=2}^{d-1} 2^{-n}\sum_{1\leq i_1 <\ldots <i_n \leq d} \frac{|R|}{\prod_{j=1}^nl_{i_j}}h^{-d+n}+2^{-d}\\
    &\lesssim_{d} 
    |R|h^{-d}\Bigl(\frac{h}{\min_i l_i}\Bigr)^2
  \end{align*}
  for all cuboids $R, \beta>0, $ and $h \in (0, \min_i l_i]$. Since $\Haus^{d-1}(\partial R) = 2 \sum_{i=1}^d l_i^{-1}|R|$, we have shown that
  \begin{equation}\label{eq: bound coordinate plane terms}
  \begin{aligned}
    \biggl|N_{\beta, R}^\pm(h^{-2})
    -
    2^{-d}\#\{x \in (h\Z)^d \cap T_RE^\pm_{\beta, h/l_1, \ldots, h/l_d}\}+ &\frac{1}{4}L_{0, d-1}^{\rm sc}\Haus^{d-1}(\partial R)h^{-d+1}\biggr|\\
    &\lesssim_d |R|h^{-d}\Bigl(\frac{h}{\min_i l_i}\Bigr)^2\, , 
  \end{aligned}
  \end{equation}
  for all cuboids $R, \beta>0, $ and $h \in (0, \min_i l_i]$.

Now consider the lattice count $\#\{x \in (h\Z)^d \cap T_RE^\pm_{\beta, h/l_1, \ldots, h/l_d}\}$. We claim that for any $d \geq 2$ and $\beta_0>0$ there exists $\delta>0$ so that the collection of sets defined by
  \begin{equation*}
    \mathcal{K}= \bigl\{E^\pm_{\beta, \delta_1, \ldots, \delta_d}: \beta\geq \beta_0, \max_i \delta_i \leq \delta\bigr\}\, , 
  \end{equation*}
  satisfies the assumptions in Proposition \ref{prop: Lattice point counting}. Indeed, this follows from the fact that $q^\pm_{\beta, \delta_1, \ldots, \delta_d}$ and its derivatives of any fixed order converge uniformly on compact sets to $q_0(x) = \pi^2|x|^2$ and its corresponding derivatives as $\max_{i}\delta_i \to 0$, uniformly for $\beta \geq \beta_0$. In particular, for $\beta \geq \beta_0$ and $h\in (0, \delta\min_i l_i]$ it follows from Proposition \ref{prop: Lattice point counting} that
  \begin{equation}\label{eq: Epm lattice count}
    \Bigl|\#\{x \in (h\Z)^d \cap T_RE^\pm_{\beta, h/l_1, \ldots, h/l_d}\}- h^{-d}|R||E_{\beta, h/l_1, \ldots, h/l_d}^\pm|\Bigr|
    \lesssim_{d, \beta_0, \delta} |R| h^{-d} \Bigl(\frac{h}{\min_i l_i}\Bigr)^{2d/(d+1)}
  \end{equation}
  provided $\delta \leq 1/2$. Here we used that $\det(T_R)=|R|$, $s_1(T_R)= \min_i l_i$, and that the measures of the sets in the collection $\mathcal{K}$ are uniformly bounded.

  Putting together \eqref{eq: bound coordinate plane terms} and \eqref{eq: Epm lattice count} completes the proof of Lemma \ref{lem: asymptotics in terms of Epm measure}.
\end{proof}

To finalize our analysis of the asymptotic behavior of $N^{\pm}_{\beta, R}$ it remains to provide a precise asymptotic description of the measures $|E^\pm_{\beta, \delta_1, \ldots, \delta_d}|$ in the appropriate asymptotic regime. 

\begin{lemma}\label{lem: Epm measure}
  Fix $d\geq 2, \beta_0>0$. There exist $\delta>0, C>0$ such that for $\beta \geq \beta_0$ and $0\leq \delta_1, \ldots, \delta_d \leq \delta$
  we have
  \begin{equation*}
    \biggl||E^\pm_{\beta, \delta_1, \ldots, \delta_d}| - 2^dL_{0, d}^{\rm sc}- 2^{d-1}(L_{0, d-1}(\beta)+L_{0, d-1}^{\rm sc})\sum_{i=1}^d \delta_i \biggr| \leq C \max_i \delta_i^2\, .
  \end{equation*}
\end{lemma}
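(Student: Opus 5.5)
The plan is to compute $|E^\pm_{\beta,\delta_1,\ldots,\delta_d}|$ exactly by a coordinatewise change of variables, and then Taylor expand the Jacobian to first order in the $\delta_i$. Write $q^\pm_{\beta,\delta_1,\ldots,\delta_d}(x)=\sum_{i=1}^d f_i^\pm(x_i)^2$, where
\[
f_i^-(t)=\pi t-2\delta_i\arctan\Bigl(\frac{\pi t}{\beta}\Bigr),\qquad f_i^+(t)=\pi t-2\delta_i\arctan\Bigl(\frac{\pi t}{\beta}-\frac{2\delta_i}{\beta}\arctan\Bigl(\frac{\pi t}{\beta}\Bigr)\Bigr).
\]
Both functions are odd and smooth. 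Since $|\frac{d}{dt}\arctan(\pi t/\beta)|\le \pi/\beta\le \pi/\beta_0$, we get $(f_i^\pm)'(t)\ge \pi(1-C\delta_i/\beta_0)>0$ once $\delta$ is small depending on $\beta_0$. The bound for $f_i^+$ uses the chain rule on the nested expression, whose inner derivative is $\pi/\beta+O(\delta_i/\beta^2)$. Hence each $f_i^\pm$ is a diffeomorphism of $\R$, and the map $F(x)=(f_1^\pm(x_1),\ldots,f_d^\pm(x_d))$ sends $E^\pm$ onto the closed unit ball $\overline{B_1(0)}$. This gives
\[
|E^\pm_{\beta,\delta_1,\ldots,\delta_d}|=\int_{B_1(0)}\prod_{i=1}^d (g_i^\pm)'(y_i)\,dy,\qquad g_i^\pm:=(f_i^\pm)^{-1}.
\]

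Next I would expand $g_i^\pm$. From $y=f_i^-(t)$ we have $\pi t=y+2\delta_i\arctan(\pi t/\beta)$. Using that $\arctan$ is $1$-Lipschitz, iterating once gives $\pi t=y+2\delta_i\arctan(y/\beta)+O(\delta_i^2/\beta)$. Differentiating the implicit relation gives
\[
(g_i^-)'(y)=\frac1\pi\Bigl(1+2\delta_i\frac{\beta}{\beta^2+y^2}\Bigr)+O(\delta_i^2),
\]
uniformly for $|y|\le 1$ and $\beta\ge\beta_0$. The constant depends only on $\beta_0$, since every derivative of $s\mapsto\arctan(s/\beta)$ is bounded by a power of $1/\beta_0$. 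For $f_i^+$, the extra term $-\frac{2\delta_i}{\beta}\arctan(\cdot)$ inside the arctangent alters $f_i^+$ and its derivative only by $O(\delta_i^2)$ in $C^1$ on compacts. Hence $(g_i^+)'$ has the same first-order expansion.

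Multiplying over $i$, the product of the derivatives equals
\[
\pi^{-d}\Bigl(1+2\sum_i\delta_i\frac{\beta}{\beta^2+y_i^2}\Bigr)+O(\max_i\delta_i^2).
\]
Integrating over $B_1(0)$, the zeroth-order term gives $\pi^{-d}\omega_d=2^dL^{\rm sc}_{0,d}$, where $\omega_d=|B_1(0)|$ and $L^{\rm sc}_{0,d}=\omega_d(2\pi)^{-d}$. For the $i$-th first-order term, slicing in $y_i$ gives
\[
\pi^{-d}2\delta_i\int_{-1}^1\frac{\beta}{\beta^2+s^2}\,\omega_{d-1}(1-s^2)^{(d-1)/2}\,ds=4\delta_i\pi^{-d}\omega_{d-1}\int_0^1(1-s^2)^{(d-1)/2}\frac{\beta}{\beta^2+s^2}\,ds .
\]
On the other side, by definition $L_{0,d-1}(\beta)+L^{\rm sc}_{0,d-1}=L^{\rm sc}_{0,d-1}\frac4\pi\int_0^1(1-s^2)^{(d-1)/2}\frac{\beta}{\beta^2+s^2}ds$. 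Multiplying by $2^{d-1}$ and using $L^{\rm sc}_{0,d-1}=\omega_{d-1}(2\pi)^{1-d}$ reproduces exactly the expression above. This yields the claimed identity with remainder $C\max_i\delta_i^2$.

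The main obstacle is making the $O(\delta_i^2)$ remainders in the Jacobian expansion uniform in $\beta\in[\beta_0,\infty)$, in particular for the nested $+$ case. I would handle this by writing $(g_i^\pm)'(y)=1/(f_i^\pm)'(g_i^\pm(y))$ and estimating the difference from the first-order expression with the mean value theorem. The only inputs needed are that $\arctan$ and its first two derivatives are bounded and that all occurrences of $1/\beta$ are bounded by $1/\beta_0$; since $|y|\le 1$, everything stays in a fixed compact set.
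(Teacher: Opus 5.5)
Your proof is correct, but it takes a different route from the paper's.

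The paper parametrizes $\partial E^\pm_{\beta,\delta_1,\ldots,\delta_d}$ radially as $r^\pm(\theta)\theta$, using that these sets are convex perturbations of $B_{1/\pi}(0)$. It extracts $r^\pm(\theta)=\frac1\pi+\frac2\pi\sum_i\delta_i\theta_i\arctan(\theta_i/\beta)+O(\max_i\delta_i^2)$ from the implicit equation $q^\pm(r^\pm(\theta)\theta)=1$ and integrates $\frac1d (r^\pm)^d$ over $\S^{d-1}$. It must then identify $\int_{\S^{d-1}}\theta_1\arctan(\theta_1/\beta)\,d\Haus^{d-1}(\theta)$ with $2^{d-2}\pi^d(L_{0,d-1}(\beta)+L_{0,d-1}^{\rm sc})$, via an integrated-by-parts formula for $L_{\gamma,d}(\beta)$ imported from the earlier work on Riesz means.

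You instead use the separable structure $q^\pm=\sum_i f_i^\pm(x_i)^2$. This buys you several things:
\begin{enumerate}
\item The diagonal diffeomorphism maps $E^\pm$ exactly onto $\overline{B_1(0)}$, so $|E^\pm|$ is an exact integral of a product Jacobian.
\item All remainder estimates reduce to inverting a monotone function of one variable.
\item The first-order term $\int_{B_1(0)}\beta/(\beta^2+y_i^2)\,dy$ slices directly into the defining formula of $L_{0,d-1}(\beta)$, so no external identity is needed.
\item You only need $(f_i^\pm)'>0$ (e.g.\ $\delta<\beta_0/2$), not convexity or star-shapedness of $E^\pm$.
\end{enumerate}
The two first-order coefficients agree by the divergence theorem, since $\partial_{y_i}\arctan(y_i/\beta)=\beta/(\beta^2+y_i^2)$. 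The paper's radial approach, for its part, does not rely on the coordinatewise product structure, so it would carry over to non-separable smooth perturbations of the ball.
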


\begin{proof}
  We argued in the proof of Lemma~\ref{lem: asymptotics in terms of Epm measure} that, for $\delta$ sufficiently small, the sets $E^\pm_{\beta, \delta_1, \ldots, \delta_d}$ are smooth convex sets which can be viewed as small perturbations of the ball $B_{1/\pi}(0)$. In particular, their boundaries can be parametrized in spherical coordinates as follows. For $\theta \in \S^{d-1}$ we define $r^\pm_{\beta, \delta_1, \ldots, \delta_d}(\theta)$ as the unique positive number so that $r^\pm_{\beta, \delta_1, \ldots, \delta_d}(\theta)\theta \in \partial E^\pm_{\beta, \delta_1, \ldots, \delta_d}$.

  The measure of $E^\pm_{\beta, \delta_1, \ldots, \delta_d}$ can thus be computed in spherical coordinates as
  \begin{equation}\label{eq: spherical volume rep}
  \begin{aligned}
    |E^{\pm}_{\beta, \delta_1, \ldots, \delta_d}| &= \int_{\S^{d-1}}\int_0^{r^\pm_{\beta, \delta_1, \ldots, \delta_d}(\theta)} r^{d-1}\, dr d\Haus^{d-1}(\theta)\\
    &=
    \frac{1}{d}\int_{\S^{d-1}} r_{\beta, \delta_1, \ldots, \delta_d}^\pm (\theta)^d \, d\Haus^{d-1}(\theta)\, .
  \end{aligned}
  \end{equation}
  To obtain the desired asymptotic bound, it suffices to derive corresponding asymptotics for $r^\pm_{\beta, \delta_1, \ldots, \delta_d}(\theta)$.

  By the definition of $E_{\beta, \delta_1, \ldots, \delta_d}^\pm$ we have that
  \begin{equation}\label{eq: implicit eq r}
  \begin{aligned}
    1&= q_{\beta, \delta_1, \ldots, \delta_d}^\pm(r_{\beta, \delta_1, \ldots, \delta_d}^\pm(\theta)\theta)\\
    &=
    \pi^2 r_{\beta, \delta_1, \ldots, \delta_d}^\pm(\theta)^2-4\pi r_{\beta, \delta_1, \ldots, \delta_d}^\pm(\theta) \sum_{i=1}^d \delta_i \theta_i\arctan\Bigl(\frac{\pi y_i^\pm}{\beta}\Bigr) +4 \sum_{i=1}^d\delta_i^2 \arctan\Bigl(\frac{\pi y_i^\pm}{\beta}\Bigr)^2
  \end{aligned}
  \end{equation}
  where
  \begin{equation*}
    y_i^-= r_{\beta, \delta_1, \ldots, \delta_d}^-(\theta)\theta_i \quad \mbox{and}\quad
    y_i^+= r_{\beta, \delta_1, \ldots, \delta_d}^+(\theta)\theta_i- \frac{2\delta_i}{\pi}\arctan\Bigl(\frac{\pi r_{\beta, \delta_1, \ldots, \delta_d}^+(\theta)\theta_i}{\beta}\Bigr)\, .
  \end{equation*}
  
  Since $\arctan$ is bounded, it follows that
  \begin{equation*}
    |r_{\beta, \delta_1, \ldots, \delta_d}^\pm(\theta)-1/\pi|\lesssim_{d, \beta_0, \delta}\max_i \delta_i \quad \mbox{and}\quad \Bigl|y^\pm_i - \frac{\theta_i}{\pi}\Bigr|\lesssim_{d, \beta_0, \delta} \max_j \delta_j\, .
  \end{equation*}
  Consequently, since all derivatives of $\arctan$ are uniformly bounded, it follows from \eqref{eq: implicit eq r} and Taylor's theorem that
  \begin{equation}\label{eq: r expansion}
    \Bigl|r_{\beta, \delta_1, \ldots, \delta_d}^\pm(\theta)-\frac{1}\pi- \frac{2}{\pi} \sum_{i=1}^d \delta_i \theta_i\arctan\Bigl(\frac{\theta_i}{\beta}\Bigr)\Bigr|\lesssim_{d, \beta_0, \delta} \max_{i}\delta_i^2\, .
  \end{equation}

  Combining \eqref{eq: r expansion} with \eqref{eq: spherical volume rep}, we conclude that
  \begin{equation*}
    \biggl||E_{\beta, \delta_1, \ldots, \delta_d}^\pm| 
    - \frac{|B_{1}|}{\pi^d} - \frac{2}{\pi^d} \sum_{i=1}^d \delta_i \int_{\S^{d-1}}\theta_i \arctan\Bigl(\frac{\theta_i}{\beta}\Bigr)\, d\Haus^{d-1}(\theta)\biggr| \lesssim_{d, \beta_0, \delta} \max_i \delta_i^2\, .
  \end{equation*}

  Since $(2\pi)^{-d}|B_1|= L_{0, d}^{\rm sc}$, it remains to compute the resulting integrals. By symmetry the integral
  \begin{align*}
    \int_{\mathbb{S}^{d-1}}  \theta_i \arctan\Big( \frac{\theta_i}{\beta }\Big) \, d\Haus^{d-1}(\theta) 
  \end{align*}
  does not depend on $i$. Thus, without loss of generality, we can compute it for $i=1$. Using the explicit formula for $L_{0, d-1}^{\rm sc}$ we find that
  \begin{align*}
    \int_{\mathbb{S}^{d-1}}  \theta_1 \arctan\Bigl( \frac{\theta_1}{\beta }\Bigr) \, d\Haus^{d-1}(\theta) &= 2 |\mathbb{S}^{d-2}| \int_0^1 x(1-x^2)^{(d-3)/2} \arctan\Bigl(\frac{x}{\beta}\Bigr) \, dx \\
    &=\frac{4\pi^{(d-1)/2}}{\Gamma((d-1)/2)} \int_0^1 x(1-x^2)^{(d-3)/2} \arctan\Bigl(\frac{x}{\beta}\Bigr) \, dx \\
    &= 2^{d-2}\pi^{d} L_{0, d-1}^{sc} \biggl[ \frac{8}{\pi} \frac{d-1}{2} \int_0^1 x(1-x^2)^{(d-3)/2} \arctan\Bigl(\frac{x}{\beta}\Bigr) \, dx \biggr] \\
    &=2^{d-2}\pi^{d} (L_{0, d-1}(\beta) + L_{0, d-1}^{sc})\, , 
  \end{align*}
where in the final step, we used the identity
\begin{equation*}
    L_{\gamma, d}(\beta) = L_{\gamma, d}^{\rm sc}\biggl[\frac{8}{\pi}\Bigl(\gamma+\frac{d}{2}\Bigr)\int_0^1 x(1-x^2)^{\gamma+d/2-1}\arctan\Bigl(\frac{x}{\beta}\Bigr)\, dx - 1\biggr]\, 
  \end{equation*}
from \cite[Lemma 2.8]{Baur2026}. This completes the proof of Lemma \ref{lem: Epm measure}.
\end{proof}

Combining the preceding results yields the desired two-term estimate for $N^\pm_{\beta, R}$.

\begin{theorem}\label{thm: sc two-term Weyl Npm}
  Fix $d\geq 2, \beta_0>0$. There exists a constant $C>0$ so that if $\beta \geq \beta_0$, $R\subset \R^d$ is a cuboid with side lengths $l_1, \ldots, l_d>0$, and $\lambda>0$ then
  \begin{align*}
    \Bigl|N^\pm_{\beta, R}(\lambda) &- L_{0, d}^{\rm sc}|R|\lambda^{d/2}- \frac{1}{4}L_{0, d-1}(\beta)\Haus^{d-1}(\partial R)\lambda^{(d-1)/2}\Bigr|\\
    & \leq C \Haus^{d-1}(\partial R)\lambda^{(d-1)/2}\bigl((\min_i l_i \sqrt{\lambda})^{(1-d)/(d+1)}+ (\min_i l_i \sqrt{\lambda})^{1-d}\bigr)\, .
  \end{align*}
\end{theorem}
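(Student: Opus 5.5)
The plan is to split into two regimes according to the size of $h/\min_i l_i$, where $h=\lambda^{-1/2}$. Let $\delta_*>0$ be the minimum of the constants $\delta$ from Lemma~\ref{lem: asymptotics in terms of Epm measure} and Lemma~\ref{lem: Epm measure} (for the given $d,\beta_0$).

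\emph{Small-cuboid regime, $\min_i l_i\sqrt{\lambda} < 1/\delta_*$.} Here I would not use the fine lattice theory at all. Lemma~\ref{lem: first order asymptotitcs} already bounds $|N^\pm_{\beta,R}(\lambda)-L^{\rm sc}_{0,d}|R|\lambda^{d/2}|$ by $C_d\Haus^{d-1}(\partial R)\lambda^{(d-1)/2}(1+(\min_i l_i\sqrt\lambda)^{1-d})$. The second term $\frac14 L_{0,d-1}(\beta)\Haus^{d-1}(\partial R)\lambda^{(d-1)/2}$ is bounded by $\frac14 L^{\rm sc}_{0,d-1}\Haus^{d-1}(\partial R)\lambda^{(d-1)/2}$, using \eqref{eq: limits of coefficient} and monotonicity. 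In this regime we have $(\min_i l_i\sqrt\lambda)^{1-d} > \delta_*^{d-1}$, so the constant $1$ can be absorbed into a multiple of $(\min_i l_i\sqrt\lambda)^{1-d}$. This gives the claim with a constant depending only on $d,\beta_0$.

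\emph{Main regime, $h\le \delta_*\min_i l_i$.} I would first apply Lemma~\ref{lem: asymptotics in terms of Epm measure} and then substitute Lemma~\ref{lem: Epm measure} with $\delta_i=h/l_i$, which is allowed since $\delta_i\le\delta_*$. This gives
\[
2^{-d}|R||E^\pm|h^{-d}=L^{\rm sc}_{0,d}|R|h^{-d}+\tfrac12(L_{0,d-1}(\beta)+L^{\rm sc}_{0,d-1})|R|h^{-d+1}\sum_i l_i^{-1}+O\bigl(|R|h^{-d}(h/\min_i l_i)^2\bigr).
\]
Using $\Haus^{d-1}(\partial R)=2|R|\sum_i l_i^{-1}$, the middle term equals $\frac14(L_{0,d-1}(\beta)+L^{\rm sc}_{0,d-1})\Haus^{d-1}(\partial R)h^{-d+1}$. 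The $L^{\rm sc}_{0,d-1}$ part then cancels exactly against the $-\frac14L^{\rm sc}_{0,d-1}\Haus^{d-1}(\partial R)h^{-d+1}$ in Lemma~\ref{lem: asymptotics in terms of Epm measure}. What remains is precisely the claimed two-term expansion, with error $C|R|h^{-d}\bigl((h/\min l)^{2d/(d+1)}+(h/\min l)^2\bigr)$.

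\emph{Converting the error.} Since $h/\min_i l_i\le\delta_*<1$ and $2d/(d+1)<2$, the quadratic term is dominated by the other one. I would then rewrite using $|R|\le \min_i l_i\,|R|\sum_j l_j^{-1}=\tfrac12\min_i l_i\,\Haus^{d-1}(\partial R)$. This yields
\[
|R|h^{-d}(h/\min l)^{2d/(d+1)}\le \tfrac12\Haus^{d-1}(\partial R)h^{-d+1}(h/\min l)^{2d/(d+1)-1}=\tfrac12\Haus^{d-1}(\partial R)\lambda^{(d-1)/2}(\min_i l_i\sqrt\lambda)^{(1-d)/(d+1)},
\]
because $2d/(d+1)-1=(d-1)/(d+1)$. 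This is the first error term, and the proof is complete.

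I expect no real obstacle, since the main work was done in the lemmas. The only points needing care are the exact cancellation of the Neumann-type boundary term, and checking that the constants depend only on $d,\beta_0$ uniformly in $\beta\ge\beta_0$. Both are guaranteed by the uniformity statements in the two lemmas.
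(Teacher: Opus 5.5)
Your proposal is correct and follows the paper's proof: you split at $\min_i l_i\sqrt{\lambda}\sim\delta^{-1}$. In the large regime you combine Lemmas~\ref{lem: asymptotics in terms of Epm measure} and~\ref{lem: Epm measure}, where the $L_{0,d-1}^{\rm sc}$ terms cancel. In the small regime you use Lemma~\ref{lem: first order asymptotitcs} and the boundedness of $L_{0,d-1}(\beta)$. You also write out the conversion $|R|\le \tfrac12\min_i l_i\,\Haus^{d-1}(\partial R)$, which the paper leaves implicit.
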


\begin{proof}
  By combining Lemmas~\ref{lem: asymptotics in terms of Epm measure} and \ref{lem: Epm measure}, we conclude that there exists $\delta>0$ such that
  \begin{align*}
    \biggl|N^\pm_{\beta, R}(\lambda)-&L_{0, d}^{\rm sc}|R|\lambda^{d/2}-\frac{1}{4}L_{0, d-1}(\beta)\Haus^{d-1}(\partial R)\lambda^{(d-1)/2}\biggr|\\
    &\lesssim_{d, \beta_0} \Haus^{d-1}(\partial R)\lambda^{(d-1)/2}(\min_i l_i \sqrt{\lambda})^{(1-d)/(d+1)}
  \end{align*}
  whenever $\beta \geq \beta_0$ and $\min_il_i\sqrt{\lambda} \geq \delta^{-1}$. Here we used again that $\Haus^{d-1}(\partial R) = 2\sum_{i=1}^d \frac{|R|}{l_i}$ and note that the terms from Lemmas~\ref{lem: asymptotics in terms of Epm measure} and \ref{lem: Epm measure} involving the constant $L_{0,d-1}^{sc}$ cancel.

  If instead $0<\min_il_i\sqrt{\lambda} \leq \delta^{-1}$, then the triangle inequality, Lemma~\ref{lem: first order asymptotitcs}, and the bound $|L_{0, d-1}(\beta)|\lesssim_d 1$ yield
  \begin{align*}
    \biggl|N^\pm_{\beta, R}(\lambda)-&L_{0, d}^{\rm sc}|R|\lambda^{d/2}-\frac{1}{4}L_{0, d-1}(\beta)\Haus^{d-1}(\partial R)\lambda^{(d-1)/2}\biggr|\\
    &\lesssim_{d, \delta}
    \Haus^{d-1}(\partial R) \lambda^{(d-1)/2}(\min_i l_i \sqrt{\lambda})^{1-d}\, .
  \end{align*}
  This completes the proof of the theorem.
\end{proof}

An immediate consequence of Theorem \ref{thm: sc two-term Weyl Npm} is the validity of Theorem~\ref{thm: sc two-term Weyl intro}.
\begin{proof}[Proof of Theorem~\ref{thm: sc two-term Weyl intro}]
  The claimed inequality follows by combining the inequalities in \eqref{eq: DN bracketing counting} with the bounds in Theorem~\ref{thm: sc two-term Weyl Npm}.
\end{proof}

\section{Uniform upper bound on the counting function}

In this section, we prove Theorem~\ref{thm: sc_inequality}. As in the case of Theorem~\ref{thm: sc two-term Weyl intro}, our argument is based on comparing the Robin counting function with a suitable lattice-point count. Here, however, a simpler comparison will suffice. The strategy is to prove that $N(-\Delta_R^{\beta \sqrt{\lambda}}, \lambda)$ is dominated by a lattice-point count in a shifted lattice, with the shift chosen in terms of $\beta$. We then show that, for all sufficiently small shifts, these lattice-point counts satisfy a corresponding P\'olya-type bound. More specifically, we shall consider the counting function
\begin{align*}
N_{s, R}(\lambda) = \# \{ k \in \mathbb{N}^d : Q_{s, R}(k) \leq \lambda \} 
\end{align*}
where $s\in [0, \pi]$, $R = \prod_{i=1}^d (0, l_i)$, and
\begin{align*}
Q_{s, R}(k) = \sum_{i=1}^d \frac{1}{l_i^2} (\pi k_i - s )^2 \,.
\end{align*}
In other words, $N_{s, R}(\lambda)$ counts the number of points of the lattice $\prod_{i=1}^dl_i^{-1}(\pi \Z-s)$ that have positive, or non-negative if $s=\pi$, coordinates and lie in the ball $\overline{B_{\sqrt{\lambda}}(0)}$. Note that $N_{s, R}(\lambda)$ is non-decreasing in $s\in [0, \pi]$, and for $s=0$ and $s=\pi$ it coincides with the counting functions for the Dirichlet and Neumann Laplacians on $R= \prod_{i=1}^d(0, l_i)$, respectively.

We begin by establishing the claimed comparison with the Robin counting function.
\begin{proposition}\label{prop: NRobin < NShifted}
  Fix $d\geq 2$ and $\beta \geq 3$. For any $s \in [2\arctan(3/\beta), \pi]$, $\lambda > 0$, and cuboid $R\subset \R^d$ it holds that
  \begin{equation*}
    N(-\Delta_{R}^{\beta\sqrt{\lambda}}, \lambda) \leq N_{s, R}(\lambda)\, .
  \end{equation*}
\end{proposition}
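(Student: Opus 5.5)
The plan is to compare eigenvalues coordinate by coordinate, using separation of variables and the implicit equation for the one-dimensional Robin eigenvalues from the proof of Lemma~\ref{lem: arctan bounds lambdak}. Write $R=\prod_{i=1}^d(0,l_i)$ and $b=\beta\sqrt{\lambda}$. By separation of variables, the eigenvalues of $-\Delta_R^{b}$ are, with multiplicity, the sums $\sum_{i=1}^d \lambda_{k_i}(-\Delta_{(0,l_i)}^{b})$ over $k\in\N^d$. By scaling, $\lambda_{k}(-\Delta_{(0,l)}^{b})=l^{-2}\lambda_k(-\Delta_{(0,1)}^{bl})$. As in the proof of Lemma~\ref{lem: arctan bounds lambdak}, we write $\lambda_k(-\Delta_{(0,1)}^{bl})=(\pi k-\delta_k)^2$, where $\delta_k=\delta_k(bl)\in(0,\pi)$ satisfies
\begin{equation*}
  \delta_k = 2\arctan\Bigl(\frac{\pi k-\delta_k}{bl}\Bigr).
\end{equation*}
Hence $N(-\Delta_R^{\beta\sqrt\lambda},\lambda)$ is the number of $k\in\N^d$ with $\sum_i l_i^{-2}(\pi k_i-\delta_{k_i}^{(i)})^2\le\lambda$, where $\delta^{(i)}_{k}=\delta_k(\beta\sqrt\lambda\, l_i)$. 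It therefore suffices to show that this index set is contained in $\{k\in\N^d: Q_{s,R}(k)\le\lambda\}$.

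Fix $k$ in the Robin index set. Since every summand is non-negative, each coordinate satisfies $l_i^{-2}(\pi k_i-\delta^{(i)}_{k_i})^2\le\lambda$. Because $\pi k_i-\delta^{(i)}_{k_i}>0$, this gives $0<\pi k_i-\delta^{(i)}_{k_i}\le l_i\sqrt\lambda$. Inserting this into the implicit equation and using that $\arctan$ is increasing, we get
\begin{equation*}
  \delta^{(i)}_{k_i}=2\arctan\Bigl(\frac{\pi k_i-\delta^{(i)}_{k_i}}{\beta\sqrt\lambda\, l_i}\Bigr)\le 2\arctan(1/\beta)\le 2\arctan(3/\beta)\le s.
\end{equation*}
This is the key step. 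The critical scaling $b=\beta\sqrt\lambda$ is exactly what makes the ratio in the arctangent uniformly bounded by $1/\beta$ on the relevant eigenvalues, independently of $l_i$ and $\lambda$.

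It remains to compare the squares. Since $k_i\ge1$ and $s\le\pi$, we have $\pi k_i-s\ge0$. Combined with $\delta^{(i)}_{k_i}\le s$, this gives $0\le \pi k_i-s\le \pi k_i-\delta^{(i)}_{k_i}$, and hence $(\pi k_i-s)^2\le(\pi k_i-\delta^{(i)}_{k_i})^2$ for each $i$. Summing with the weights $l_i^{-2}$ yields $Q_{s,R}(k)\le\lambda$, which proves the inclusion of index sets and thus the inequality.

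I do not expect a serious obstacle. The only points needing care are that the bound on $\delta$ is applied only to indices already known to be counted, which is what makes it uniform, and that the sign condition $\pi k_i-s\ge0$ holds so that squaring preserves the inequality. The assumption $\beta\ge3$ is used only to ensure $2\arctan(3/\beta)\le\pi$, so that the admissible range of $s$ is non-empty. The argument in fact works with $2\arctan(1/\beta)$, so the constant $3$ is slack here; it is presumably chosen for convenience in the subsequent comparison with the Dirichlet counting function.
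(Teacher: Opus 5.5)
Your argument is correct, and it takes a different and cleaner route than the paper's.

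\textbf{How the paper argues.} The paper does not use the implicit equation for $\delta_k$ directly. It goes through the explicit lower bound $N(-\Delta_R^{\beta\sqrt\lambda},\lambda)\le N^-_{\beta,R}(\lambda)$ from \eqref{eq: DN bracketing counting}. There the arctangent has argument $\pi k_i/(\beta\sqrt\lambda\, l_i)$, not the exact $\sqrt{\lambda_{k_i}}/(\beta\sqrt\lambda\, l_i)$. Bounding $\pi k_i\le l_i\sqrt\lambda+\pi$ then gives the shift $2\arctan\bigl(\beta^{-1}(1+\pi/(\min_i l_i\sqrt\lambda))\bigr)$, which blows up when $\min_i l_i\sqrt\lambda$ is small. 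The paper therefore splits into two regimes:
\begin{itemize}
\item for $\lambda\le \pi^2/(2\min_i l_i)^2$, Lemma~\ref{lem: lambda1_greater} shows the Robin counting function vanishes, which needs $\beta>1$;
\item for larger $\lambda$, Lemma~\ref{lem: sec4_Ns_estimate} is applied with $\Lambda=\pi^2/(2\min_i l_i)^2$, and $1+\pi/(\min_i l_i\sqrt\Lambda)=3$ is where the constant $3$ comes from.
\end{itemize}

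\textbf{What your route gains.} By keeping the exact eigenvalue inside the arctangent, you get the self-consistent bound $\delta^{(i)}_{k_i}\le 2\arctan(1/\beta)$ for every counted index. This needs no regime split and no lower bound on $\beta$. It also gives a sharper admissible range $s\in[2\arctan(1/\beta),\pi]$. Fed into the proof of Theorem~\ref{thm: sc_inequality}, this yields $\beta(d)\le\cot(s(d)/2)$ instead of $\max\{3,3\cot(s(d)/2)\}$. The paper's route only has the modest advantage of reusing $N^-_{\beta,R}$, which was already set up for the asymptotics; yours is shorter and gives a better constant.

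\textbf{One side remark is off.} Since $2\arctan(3/\beta)<\pi$ for every $\beta>0$, the hypothesis $\beta\ge 3$ is not needed to make the range of $s$ nonempty. In the paper it is used only in the small-$\lambda$ regime. The $3$ comes from that two-regime argument, not from the later comparison with the Dirichlet counting function.
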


We split the proof of Proposition \ref{prop: NRobin < NShifted} into two regimes according to the size of the quantity $\min_i l_i \sqrt{\lambda}$. When $\min_i l_i \sqrt{\lambda}$ is sufficiently small, the desired estimate follows immediately from the fact that the Robin counting function vanishes.

\begin{lemma} \label{lem: lambda1_greater}
Fix $d\geq 2$ and $\beta >1$. If $R= \prod_{i=1}^d(0, l_i)$ and $\lambda \in (0, \pi^2/(2\min_i l_i)^2]$ then 
\begin{equation*}
  N(-\Delta_R^{\beta\sqrt{\lambda}}, \lambda) =0\, .
\end{equation*}
\end{lemma}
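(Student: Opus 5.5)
The plan is to show directly that the lowest eigenvalue of $-\Delta_R^{\beta\sqrt{\lambda}}$ is strictly larger than $\lambda$. Then no eigenvalue lies in $(-\infty,\lambda]$ and the counting function vanishes.

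\emph{Separation of variables.} The eigenvalues of $-\Delta_R^{\sigma}$ with $\sigma=\beta\sqrt\lambda$ are the sums of one-dimensional Robin eigenvalues on the intervals $(0,l_i)$. Scaling gives
\begin{equation*}
  \lambda_1(-\Delta_R^{\sigma})=\sum_{i=1}^d \frac{1}{l_i^2}\lambda_1\bigl(-\Delta_{(0,1)}^{\sigma l_i}\bigr).
\end{equation*}
Each summand is strictly positive because $\sigma>0$. It therefore suffices to treat the index $j$ with $l_j=\min_i l_i$ and to show that
\begin{equation*}
  l_j^{-2}\lambda_1\bigl(-\Delta_{(0,1)}^{\beta a}\bigr)\geq \lambda, \qquad a:=l_j\sqrt\lambda\in(0,\pi/2],
\end{equation*}
which is equivalent to $\lambda_1(-\Delta_{(0,1)}^{\beta a})\geq a^2$. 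The strict inequality $\lambda_1(-\Delta_R^{\sigma})>\lambda$ then comes from the remaining positive summands, since $d\geq 2$.

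\emph{The one-dimensional bound.} Write $b=\beta a$ and $x=\sqrt{\lambda_1(-\Delta_{(0,1)}^{b})}\in(0,\pi)$. By the implicit equation recalled in the proof of Lemma~\ref{lem: arctan bounds lambdak}, with $\delta_1=\pi-x$, we have $\pi-x=2\arctan(x/b)$. Rewriting this gives $\tan((\pi-x)/2)=x/b$, that is, $\cot(x/2)=x/b$, or
\begin{equation*}
  x\tan(x/2)=b.
\end{equation*}
The map $f(t)=t\tan(t/2)$ is strictly increasing from $0$ to $\infty$ on $(0,\pi)$. Hence $x>a$ holds if and only if $f(a)<b=\beta a$, i.e. $\tan(a/2)<\beta$. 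Since $a\leq\pi/2$ we have $\tan(a/2)\leq\tan(\pi/4)=1<\beta$. This gives $x>a$, and thus $\lambda_1(-\Delta_{(0,1)}^{\beta a})>a^2$.

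\emph{Main obstacle.} The only delicate point is to avoid the cruder bound from Lemma~\ref{lem: arctan bounds lambdak}. The estimate $\pi-2\arctan(\pi/(\beta a))$ degenerates as $a\to 0$, whereas the exact transcendental equation handles small $a$ cleanly. Once the equation is rewritten in the monotone form $x\tan(x/2)=b$, the argument is a one-line comparison, and the hypotheses $\beta>1$ and $\lambda\leq \pi^2/(2\min_i l_i)^2$ enter precisely through $\tan(a/2)\leq 1<\beta$.
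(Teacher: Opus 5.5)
Your proposal is correct and follows the paper's proof essentially verbatim. The paper likewise writes $k=\sqrt{\lambda_1(-\Delta_{(0,1)}^{\beta x})}$ as the solution of $k\tan(k/2)=\beta x$, and uses $\tan(x/2)\le 1<\beta$ together with the monotonicity of $y\mapsto y\tan(y/2)$ to get $\lambda_1(-\Delta_{(0,1)}^{\beta x})>x^2$, before reducing to the shortest side by separation of variables and scaling. Your version is, if anything, slightly more careful: you use monotonicity on all of $(0,\pi)$, where $k$ actually lives, and your strict one-dimensional inequality already makes the appeal to the extra positive summands from $d\ge 2$ unnecessary.
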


\begin{proof}
Let $k=\sqrt{\lambda_1(-\Delta_{(0, 1)}^{\beta x})}\in(0, \pi)$ and $x \in (0, \pi/2)$. It is well known that $k\in(0, \pi)$ is the unique solution of the equation 
  \begin{align*}
  k\tan(k/2)=\beta x.
\end{align*}
Using that $\tan(x/2) \leq 1$ for $x \in (0, \pi/2]$, we obtain that
\begin{equation*}
  x\tan(x/2) \leq x < \beta x = k \tan(k/2) \, .
\end{equation*}
Since $(0, \pi/2) \ni y \mapsto y \tan(y/2)$ is strictly increasing, we conclude that $k>x$ or equivalently that $\lambda_1(-\Delta_{(0, 1)}^{\beta x}) >x^2$ for $x\in (0, \pi/2)$. 
By using the product structure of $R$ and rescaling we arrive at 
\begin{equation*}
  \lambda_1(-\Delta_R^{\beta\sqrt{\lambda}}) \geq \lambda_1(-\Delta_{(0, \min_i l_i)}^{\beta \sqrt{\lambda}}) = (\min_il_i)^{-2}\lambda_1(-\Delta_{(0, 1)}^{\beta \min_i l_i\sqrt{\lambda}})>\lambda\, . 
\end{equation*}
This proves that $N(-\Delta_R^{\beta \sqrt{\lambda}}, \lambda) =0$, which completes the proof.
\end{proof}

For the regime where $\min_i l_i\sqrt{\lambda}$ is bounded from below, we can instead compare the Robin counting function directly with a suitably shifted lattice-point count.

\begin{lemma} \label{lem: sec4_Ns_estimate}
Fix $d\geq 2$, $\beta>0$, and $\Lambda > 0$. Let $R =\prod_{i=1}^d(0, l_i)$ and set 
\begin{align*}
  s_0 = 2 \arctan\biggl( \frac{1}{\beta} \biggl( 1 + \frac{\pi}{ \min_il_i \sqrt{\Lambda}} \biggr) \biggr)\, .
\end{align*}
Then, for all $\lambda \geq \Lambda$ and $s \in [s_0, \pi]$, 
\begin{align*}
    N(-\Delta_R^{\beta\sqrt{\lambda}}, \lambda) \leq N_{s, R}(\lambda)\, .
\end{align*}
\end{lemma}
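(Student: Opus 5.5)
We will compare eigenvalues one coordinate at a time and then use separation of variables. The eigenvalues of $-\Delta_R^{\beta\sqrt{\lambda}}$ are
\[
\sum_i l_i^{-2}\lambda_{k_i}\bigl(-\Delta_{(0,1)}^{\beta\sqrt{\lambda}l_i}\bigr), \qquad k\in\N^d .
\]
Since $N_{s,R}$ counts the $k\in\N^d$ with $\sum_i l_i^{-2}(\pi k_i-s)^2\le\lambda$, the claim follows once we show the following. For every $k\in\N^d$ with $\sum_i l_i^{-2}\lambda_{k_i}\bigl(-\Delta_{(0,1)}^{\beta\sqrt{\lambda}l_i}\bigr)\le\lambda$, we have $\lambda_{k_i}\bigl(-\Delta_{(0,1)}^{\beta\sqrt{\lambda}l_i}\bigr)\ge(\pi k_i-s)^2$ for each $i$. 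Because $\pi k_i-s\ge 0$ for $s\le\pi$, this amounts to $\delta_{k_i}(\beta\sqrt{\lambda}l_i)\le s$. Here $\delta_k(\cdot)=\pi k-\sqrt{\lambda_k}$ is the quantity from the proof of Lemma~\ref{lem: arctan bounds lambdak}.

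By Lemma~\ref{lem: arctan bounds lambdak},
\[
\delta_{k_i}(\beta\sqrt{\lambda}l_i)<2\arctan\Bigl(\frac{\pi k_i}{\beta\sqrt{\lambda}l_i}\Bigr).
\]
Since $\arctan$ is increasing and $s\ge s_0$, it suffices to show
\[
\frac{\pi k_i}{l_i\sqrt{\lambda}}\le 1+\frac{\pi}{\min_j l_j\sqrt{\Lambda}}.
\]
The key point is that $k$ is an admissible multi-index. Each summand satisfies $l_i^{-2}\lambda_{k_i}(\cdots)\le\lambda$, and by \eqref{eq: DN bracketing 1D} $\lambda_{k_i}>\pi^2(k_i-1)^2$. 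Hence $\pi(k_i-1)<l_i\sqrt{\lambda}$, that is,
\[
\frac{\pi k_i}{l_i\sqrt{\lambda}}<1+\frac{\pi}{l_i\sqrt{\lambda}}\le 1+\frac{\pi}{\min_j l_j\sqrt{\Lambda}},
\]
where the last step uses $\lambda\ge\Lambda$ and $l_i\ge\min_j l_j$. This is exactly what is needed.

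Combining the steps, each admissible $k$ satisfies $(\pi k_i-s)^2\le(\pi k_i-\delta_{k_i})^2=\lambda_{k_i}(\cdots)$ coordinatewise. Summing gives $Q_{s,R}(k)\le\lambda$, so $k$ is counted by $N_{s,R}(\lambda)$, and the inequality of counts follows.

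The only delicate point is the direction of the monotonicities: we need $\pi k_i-s\ge0$, which holds since $k_i\ge1$ and $s\le\pi$, so that squaring preserves the inequality. We also need the a priori bound on $k_i$ to come from admissibility rather than from an arbitrary $k$. With those checks the argument is short, and I expect no real obstacle.
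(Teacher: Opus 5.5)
Your proof is correct and uses essentially the same argument as the paper. Admissibility gives the a priori bound $\pi k_i < l_i\sqrt{\lambda}+\pi$, and combining this with the bound $\delta_{k_i}<2\arctan\bigl(\pi k_i/(\beta\sqrt{\lambda}\,l_i)\bigr)$ and the monotonicity of $\arctan$ finishes the proof. The only difference is cosmetic: the paper first reduces to $s=s_0$ and to the intermediate count $N^-_{\beta,R}(\lambda)$ via \eqref{eq: DN bracketing counting}, whereas you work directly with the one-dimensional Robin eigenvalues and arbitrary $s\in[s_0,\pi]$.
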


\begin{proof}
  By the monotonicity of $N_{s, R}(\lambda)$ with respect to $s$ it suffices to prove the claimed inequality for $s=s_0$. Furthermore, by \eqref{eq: DN bracketing counting} it suffices to prove that $N^-_{\beta, R}(\lambda)\leq N_{s_0, R}(\lambda)$ for $\beta, \lambda$ as in the statement.
  
  Assume that $k \in \mathbb{N}^d$ satisfies 
  \begin{equation*}
    \sum_{i=1}^d\frac{1}{l_i^2} \biggl( \pi k_i - 2 \arctan\biggl( \frac{\pi k_i}{\beta l_i \sqrt{\lambda}}\biggr) \biggr)^2\leq \lambda\, .
  \end{equation*}
  Then, for each $i=1, ..., d$, 
  \begin{align*}
    \frac{\pi^2}{l_i^2} ( k_i - 1)^2 < \frac{1}{l_i^2} \biggl( \pi k_i - 2 \arctan\biggl( \frac{\pi k_i}{\beta l_i \sqrt{\lambda}}\biggr) \biggr)^2 \leq \lambda\, , 
  \end{align*}
  and so it follows that $\pi k_i \leq l_i \sqrt{\lambda} + \pi$ for each $i = 1, \ldots, d$. For $\lambda \geq \Lambda$ we conclude that
  \begin{align*}
    \sum_{i=1}^d \frac{1}{l_i^2} \biggl( \pi k_i - 2 \arctan\biggl( \frac{\pi k_i}{\beta l_i \sqrt{\lambda}}\biggr) \biggr)^2
    & \geq \sum_{i=1}^d \frac{1}{l_i^2} \biggl( \pi k_i - 2 \arctan\biggl( \frac{1}{\beta} \biggl( 1 + \frac{\pi}{ \min_jl_j\sqrt{\Lambda}} \biggr) \biggr) \biggr)^2\\
    &=Q_{s_0, R}(k)\, .
  \end{align*}
Thus, $Q_{s_0, R}(k) \leq \lambda$ and therefore 
\begin{align*}
   N_{\beta, R}^-(\lambda) \leq \# \{ k \in \mathbb{N}^d : Q_{s_0, R}(k) \leq \lambda \} = N_{s_0, R}(\lambda)\, , 
\end{align*}
which completes the proof.
\end{proof}

The proof of Proposition \ref{prop: NRobin < NShifted} is now simply a matter of suitably combining the preceding two lemmas.

\begin{proof}[Proof of Proposition \ref{prop: NRobin < NShifted}]
  Since $\beta \geq 3$ and $N_{s, R}(\lambda)\geq 0$ the claimed bound follows trivially from Lemma \ref{lem: lambda1_greater} for all $\lambda \in (0, \pi^2/(2 \min_i l_i)^2]$.

  For $\lambda \geq \pi^2/(2 \min_i l_i)^2$ we apply instead Lemma~\ref{lem: sec4_Ns_estimate} with $\Lambda= \pi^2/(2\min_il_i)^2$. This yields that
  \begin{equation*}
    N(-\Delta_R^{\beta\sqrt{\lambda}}, \lambda) \leq N_{s, R}(\lambda)
  \end{equation*}
  for all $s \in[2\arctan(3/\beta), \pi]$.
\end{proof}

In view of Proposition \ref{prop: NRobin < NShifted}, the bound in Theorem \ref{thm: sc_inequality} is a consequence of the following bound for $N_{s, R}(\lambda)$.

\begin{proposition} \label{prop: shifted_lattice_bound}
Fix $d \geq 2$. Then there exists a constant $s(d)>0$ such that for any cuboid $R\subset \R^d$ and any $s \in [0, s(d)]$, $\lambda \geq 0$ we have
\begin{equation}\label{eq: shifted lattice bound nonstrict}
  N_{s, R}(\lambda)  \leq L_{0, d}^{\rm sc} |R| \lambda^{d/2}\, .
\end{equation}
Moreover, if $s \in [0, s(d))$, then the inequality is strict for each cuboid $R$ and $\lambda >0$. If instead $s \in (s(d), \pi]$ then there exists a cuboid $R$ and a $\lambda>0$ for which the inequality is violated.
\end{proposition}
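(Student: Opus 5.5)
The plan is to define $s(d)$ through the monotonicity of $N_{s,R}(\lambda)$ in $s$:
$$
s(d):=\sup\bigl\{s\in[0,\pi]: N_{s,R}(\lambda)\le L_{0,d}^{\rm sc}|R|\lambda^{d/2}\ \text{for all cuboids } R \text{ and all } \lambda\ge 0\bigr\}.
$$
Since $N_{s,R}(\lambda)$ is non-decreasing in $s$, the admissible set is an interval containing $0$; this is P\'olya's theorem for cuboids. By definition, every $s>s(d)$ then admits a violating pair $(R,\lambda)$. Three things remain: that $s(d)>0$, that the bound also holds at $s=s(d)$, and that it is strict for $s<s(d)$.

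\emph{Positivity of $s(d)$.} I would compare with the Dirichlet count at an additively enlarged radius. Put $v=(s/l_1,\ldots,s/l_d)$. Translation by $v$ maps the point $x=((\pi k_i-s)/l_i)_i$ to the Dirichlet lattice point $(\pi k_i/l_i)_i$, and $|x+v|\le |x|+|v|$. This gives
$$
N_{s,R}(\lambda)\le N(-\Delta_R^{\rm D},(\sqrt\lambda+s|v|_0)^2),\qquad |v|_0=\Bigl(\sum_i l_i^{-2}\Bigr)^{1/2}.
$$
Next I would invoke the refined P\'olya bound of \cite{Gittins2017}, in a form like $N(-\Delta_R^{\rm D},\mu)\le L_{0,d}^{\rm sc}|R|\mu^{d/2}-c_d\Haus^{d-1}(\partial R)\mu^{(d-1)/2}$. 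This is needed whenever the left side is non-zero, and otherwise the bound is trivial. Expanding $(\sqrt\lambda+s|v|_0)^d$, the terms with $j$ factors of $s|v|_0$ are of size $L_{0,d}^{\rm sc}|R|(s|v|_0)^j\lambda^{(d-j)/2}$.

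One can restrict to $\lambda$ with $N_{s,R}(\lambda)\ge1$. Then $\sum_i (\pi-s)^2/l_i^2\le\lambda$, so $|v|_0\le\sqrt\lambda/(\pi-s)$ is bounded by a multiple of $\sqrt\lambda$ for $s\le\pi/2$. Hence all the correction terms are bounded by $C_d\, s\,|R||v|_0\lambda^{(d-1)/2}$. Since $|R||v|_0\le|R|\sum_i l_i^{-1}=\tfrac12\Haus^{d-1}(\partial R)$, they are absorbed by the deficit $c_d\Haus^{d-1}(\partial R)\mu^{(d-1)/2}$ once $s$ is small. Here $\mu\ge\lambda$, and the deficit is monotone in $\mu$ modulo the same type of error, which I also absorb. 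I expect the main obstacle to be making this absorption rigorous: checking exactly which refined inequality \cite{Gittins2017} provides, and for which range of $\mu$, particularly in the regime where $\min_i l_i\sqrt\lambda$ is of order one.

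\emph{Closedness at $s(d)$.} Fix $R$ and $\lambda$. Only finitely many $k$ are relevant, and $Q_{s',R}(k)\to Q_{s(d),R}(k)$ as $s'\uparrow s(d)$. So for any $\lambda'>\lambda$ there is $s'<s(d)$ with $N_{s(d),R}(\lambda)\le N_{s',R}(\lambda')\le L_{0,d}^{\rm sc}|R|\lambda'^{d/2}$. Letting $\lambda'\downarrow\lambda$ gives \eqref{eq: shifted lattice bound nonstrict} at $s=s(d)$.

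\emph{Strictness for $s<s(d)$.} Suppose equality holds for some $R$, some $\lambda>0$ and some $s<s(d)$. If no lattice point satisfies $Q_{s,R}(k)=\lambda$, then $N_{s,R}(\lambda')=N_{s,R}(\lambda)>L_{0,d}^{\rm sc}|R|\lambda'^{d/2}$ for $\lambda'<\lambda$ close to $\lambda$, which contradicts the bound. So some $k$ satisfies $Q_{s,R}(k)=\lambda$. For $s'\in(s,s(d)]$, every coordinate $\pi k_i-s'$ has strictly smaller absolute value than $\pi k_i-s$ in the relevant range, so $Q_{s',R}(k)<\lambda$ for all such $k$. Choosing $\lambda'<\lambda$ close enough, we get $N_{s',R}(\lambda')\ge N_{s,R}(\lambda)=L_{0,d}^{\rm sc}|R|\lambda^{d/2}>L_{0,d}^{\rm sc}|R|\lambda'^{d/2}$. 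This contradicts the non-strict bound at $s'\le s(d)$.
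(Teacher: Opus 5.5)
Your proposal is correct and follows the paper's proof. Like the paper, you dominate $N_{s,R}$ by a Dirichlet count at an enlarged cut-off, absorb the enlargement with the refined P\'olya bound of \cite[Lemma 2.1]{Gittins2017}, and get closedness at $s(d)$ and strictness below it by perturbing the cut-off, which is equivalent to the paper's rescaling $R\mapsto tR$. Your additive enlargement $\sqrt{\mu}=\sqrt{\lambda}+s(\sum_i l_i^{-2})^{1/2}$ with a perimeter-type deficit is only a tidier bookkeeping of the paper's multiplicative one.

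The inequality you were unsure about does follow from that lemma. If $N(-\Delta_R^{\rm D},\mu)\geq 1$ then $\min_i l_i\sqrt{\mu}\geq \pi$, so fixing $b=\min\{b_0,\pi A_d/(2B_d)\}$ gives a deficit of at least $\tfrac{A_d b}{2}L_{0,d}^{\rm sc}\frac{|R|}{\min_i l_i}\mu^{(d-1)/2}\geq \tfrac{A_d b}{4d}L_{0,d}^{\rm sc}\Haus^{d-1}(\partial R)\mu^{(d-1)/2}$. This is increasing in $\mu$, so no extra error term arises there.
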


A key ingredient in our proof of Proposition \ref{prop: shifted_lattice_bound} is \cite[Lemma 2.1]{Gittins2017} which provides a parameter-dependent upper bound on the Dirichlet counting function.

\begin{lemma}[{\cite[Lemma 2.1]{Gittins2017}}] \label{lem: gittinsLarsonlemma}
Fix $d \geq 2$. There exist constants $A_d$, $B_d$, $b_0>0$ such that for any cuboid $R=\prod_{i=1}^d(0, l_i)$, the bound
\begin{align*}
  N(-\Delta_R^{\rm D} , \, \lambda) \leq L_{0, d}^{\mathrm sc} |R| \lambda^{d/2}\Bigl(1 - \frac{A_d b}{\min_i l_i \sqrt{\lambda}} + \frac{B_d b^2}{(\min_il_i)^2\lambda}\Bigr)
\end{align*}
holds for any $\lambda \geq 0$ and any $b \in [0, \, b_0]$.

\end{lemma}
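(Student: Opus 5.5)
The plan is to prove the bound by P\'olya's tiling argument for the Dirichlet counting function on a cuboid, refined so that it keeps a gain of boundary order. Set $\mu_i = \pi/l_i$ and $\mu_{\max} = \pi/\min_i l_i$. Then
\begin{equation*}
  N(-\Delta_R^{\rm D}, \lambda) = \#\Bigl\{k\in \N^d : \sum_{i=1}^d \mu_i^2 k_i^2 \leq \lambda\Bigr\}\, .
\end{equation*}
To each such $k$ attach the closed box $C_k = \prod_{i=1}^d[\mu_i(k_i-1), \mu_i k_i]$, of volume $\pi^d/|R|$. These boxes have disjoint interiors, and since every point of $C_k$ is dominated coordinatewise by its corner $(\mu_ik_i)_i$, each $C_k$ lies in $\overline{B_{\sqrt\lambda}(0)}\cap [0,\infty)^d$. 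Comparing volumes, and using $2^{-d}|B_1| = \pi^d L_{0,d}^{\rm sc}$, gives P\'olya's inequality $N(-\Delta_R^{\rm D},\lambda)\le L_{0,d}^{\rm sc}|R|\lambda^{d/2}$. The lemma asks for a quantitative improvement of this covering, of relative size $b/(\min_i l_i\sqrt\lambda)$.

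To get the gain, I would show that a slightly \emph{enlarged} copy of the union $U=\bigcup_k C_k$ still fits inside the orthant part of the ball. One option is to replace each $C_k$ by $\widetilde C_k = C_k + b\,\mu_{\max}\,\nu(k)$, where $\nu(k)$ is an inward unit direction, for example $-k/|k|$ projected to stay in the orthant. A second, cleaner option is to show directly that
\begin{equation*}
  (1+\eta)\,U \subset \overline{B_{\sqrt\lambda}(0)}\cap[0,\infty)^d\, , \qquad \eta = \frac{c_d\, b\,\mu_{\max}}{\sqrt\lambda}\, ,
\end{equation*}
up to an exceptional family of boxes. The mechanism is that for most $y\in C_k$ the inequality $y\le \mu k$ is strict by about $\mu_i$ in at least one coordinate. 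Hence $|y|^2\le \lambda - c\,\mu_{\max}\sqrt\lambda$ outside a thin slab of $C_k$ near its far corner, and this slab has relative volume $O(b)$ when its width is chosen as $b\,\mu_{\max}$. Volume comparison then gives
\begin{equation*}
  (1+\eta)^d\,N\,\pi^d/|R| \le 2^{-d}|B_1|\lambda^{d/2} + \mathcal{E}\, .
\end{equation*}
Using $(1+\eta)^{-d}\le 1-d\eta+\binom{d+1}{2}\eta^2$ produces the factor $1-A_d b/(\min_il_i\sqrt\lambda)$. The $B_d b^2$ term absorbs both the quadratic part of this expansion and the slab error $\mathcal E$. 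The error $\mathcal E$ should be $O(b^2\mu_{\max}^2\lambda^{(d-2)/2})$ times $|R|$-normalised constants, since it comes from boxes meeting the sphere in a slab of width $b\mu_{\max}$.

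A separate simple case handles $\sqrt\lambda<\mu_{\max}$, i.e. $\min_i l_i\sqrt\lambda<\pi$. There $N(-\Delta_R^{\rm D},\lambda)=0$, and the right-hand side is nonnegative once $b_0$ is small enough relative to $A_d,B_d$ (the quadratic polynomial in $b/(\min_il_i\sqrt\lambda)$ stays nonnegative). In the complementary regime, $\eta\le c_d b_0\pi/\pi$ is small, which justifies the Taylor expansions above.

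The main obstacle is the geometric step, namely proving the enlarged-inclusion claim uniformly in the aspect ratios $l_i$. Boxes are very elongated in the long directions. The gain must therefore be extracted only from the shortest direction, where the box side $\mu_{\max}$ is largest, and it must be checked that the sphere's curvature does not destroy it for boxes whose far corner lies near a coordinate hyperplane. My first attempt would be a one-dimensional slicing argument. Fix the coordinates other than the one with the shortest side, and compare the column of boxes with the corresponding chord of the ball. On each chord, the boxes occupy length at most the chord length minus a definite fraction of $\mu_{\max}$ whenever the chord has length at least $\mu_{\max}$. Integrating this one-dimensional deficit over the remaining coordinates gives the linear term with an explicit $A_d$, and the short chords contribute only to the $B_d b^2$ correction.
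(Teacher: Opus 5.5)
The paper does not prove this lemma; it is quoted from \cite{Gittins2017}. Your P\'olya tiling step is fine, but \textbf{neither of your two mechanisms produces the boundary-order gain}, and that gain is the whole content of the lemma.

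\textbf{The dilation scheme.} Write $B^+=\overline{B_{\sqrt\lambda}(0)}\cap[0,\infty)^d$ and $S_\eta=\{y\in B^+:|y|>\sqrt\lambda/(1+\eta)\}$. The scheme is circular. Every $y\in U\cap S_\eta$ satisfies $(1+\eta)y\notin B^+$, so your exceptional set must contain all of $U\cap S_\eta$. The volume comparison then reduces to the triviality $|U\setminus S_\eta|\le|B^+\setminus S_\eta|=(1+\eta)^{-d}|B^+|$. A gain appears only if you already know that $U$ leaves a definite fraction of the shell $S_\eta$ uncovered, which is exactly what has to be proved.

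Concretely, the pointwise claim fails. Take $\mu_1=\mu_{\max}$. For $y\in C_k$ with $y_1\le\mu_1(k_1-b)$ you only get $|y|^2\le\lambda-b\mu_1(2\mu_1k_1-b\mu_1)$. This is a gain of order $\mu_{\max}\sqrt\lambda$ only when $\mu_1k_1\gtrsim\sqrt\lambda$. The slack in a coordinate $i\ge2$ is at most $\mu_i$, which improves $|y|^2$ by at most $2\mu_i\sqrt\lambda$, and this can be arbitrarily small compared with $\mu_{\max}\sqrt\lambda$. Nor is $\mathcal E$ of order $b^2\mu_{\max}^2\lambda^{(d-2)/2}$. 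Slabs of relative width $b$ in the boxes near the sphere have total volume comparable to $b\mu_{\max}\lambda^{(d-1)/2}$, the same order as the gain $d\eta|B^+|$ you want to extract.

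\textbf{The slicing fallback} fails for the same reason. Write $k=(k_1,k')$ and $\mu'k'=(\mu_ik_i)_{i\ge2}$. Over the cross-section of the column indexed by $k'$, the boxes fill exactly $x_1\in[0,\mu_1\lfloor\sqrt{\lambda-|\mu'k'|^2}/\mu_1\rfloor]$. Suppose $\sqrt\lambda/\mu_1=n+\epsilon$ with $\epsilon$ small. Then every column with $|\mu'k'|^2\le\epsilon\mu_1\sqrt\lambda$ is filled up to $n\mu_1$, while all its chords have length at most $(n+\epsilon)\mu_1$. So on each such chord the deficit is at most $\epsilon\mu_1$, not a definite fraction of $\mu_{\max}$; it is $0$ on the chord through a lattice point lying on the sphere. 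The deficit exists only on average over $k'$, and controlling that average uniformly in the $l_i$ is the lattice-point problem itself.

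\textbf{What works} is to slice the other way and let the lower-dimensional P\'olya inequality remove the irregularity first. With $l_1=\min_il_i$ and $R'=\prod_{i\ge2}(0,l_i)$,
\[
N(-\Delta_R^{\rm D},\lambda)=\sum_{k_1\ge1}N\bigl(-\Delta_{R'}^{\rm D},\lambda-\mu_1^2k_1^2\bigr)\le L^{\rm sc}_{0,d-1}|R'|\sum_{k\ge1}g(\mu_1k)\, ,\qquad g(t)=(\lambda-t^2)_+^{(d-1)/2}\, .
\]
Now $g$ is decreasing. It is concave on $[0,\sqrt{\lambda/(d-2)}]$ for $d\ge3$ and on $[0,\sqrt\lambda]$ for $d=2$. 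The trapezoid inequality on the concave range gives $\mu_1\sum_{k\ge1}g(\mu_1k)\le\int_0^\infty g-\frac{\mu_1}{2}\bigl(g(0)-g(\mu_1K)\bigr)$, where $\mu_1K$ is the last grid point in that range. Since $L^{\rm sc}_{0,d-1}|R'|\mu_1^{-1}\int_0^\infty g=L^{\rm sc}_{0,d}|R|\lambda^{d/2}$, this is a relative gain $\gtrsim_d 1/(l_1\sqrt\lambda)$ once $l_1\sqrt\lambda$ exceeds a dimensional constant.

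The remaining ranges are easy. In the bounded range $\pi\le l_1\sqrt\lambda\le c(d)$, the deficit on the first interval alone suffices. For $l_1\sqrt\lambda<\pi$ one has $N=0$; there, nonnegativity of $1-A_du+B_du^2$ for all $u\ge0$ requires $A_d^2\le4B_d$, not merely $b_0$ small.
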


We are now ready to prove Proposition \ref{prop: shifted_lattice_bound}.

\begin{proof}[Proof of Proposition \ref{prop: shifted_lattice_bound}]
Since $N_{s, R}(\lambda)$ is non-decreasing in $s$, the set of $s\in [0, \pi]$ for which \eqref{eq: shifted lattice bound nonstrict} holds for every cuboid and every $\lambda \geq 0$ is an interval. At this point, it could be that this interval is half-open, but we will later show that the interval is closed. At $s=0$ and $s=\pi$ the counting function $N_{s, R}$ coincides with that for the Dirichlet and Neumann Laplacians, respectively. Consequently, by the validity of P\'olya's conjecture for cuboids and the strictness of the corresponding inequalities (see, for instance, \cite[Lemmas 2.1 \& 2.2]{Gittins2017}), we know that this interval contains $0$ but not $\pi$. We can therefore define $s(d) \in [0, \pi)$ as the right endpoint (supremum) of this interval.

We first show that $s(d)>0$. To achieve this, it suffices to show that there exists $s>0$ for which \eqref{eq: shifted lattice bound nonstrict} holds for all $R$ and $\lambda \geq 0$. We assume without loss of generality that $R= \prod_{i=1}^d(0, l_i)$ with $l_1\leq ... \leq l_d$. If $\lambda \leq \pi^2/(4l_1^2)$ and $s\leq \pi/2$ then $N_{s, R}(\lambda)=0$, so the desired inequality is trivially true. It therefore remains to consider $\lambda \geq \pi^2/(4l_1^2)$.

Assume that $k \in \mathbb{N}^d$ satisfies $Q_{s, R}(k) \leq \lambda$. Then $\pi k_i \leq l_i \sqrt{\lambda} + s$ for any $i=1, ..., d$ and since 
\begin{align*}
  Q_{s, R}(k) = Q_{0, R}(k) - 2s \sum_{i=1}^d \frac{\pi k_i}{l_i^2 } + s^2 \sum_{i=1}^d \frac{1}{l_i^2 } \, , 
\end{align*}
we conclude that
\begin{align*}
  Q_{0, R}(k) & \leq \lambda + 2s \sum_{i=1}^d \frac{\pi k_i}{l_i^2 } - s^2 \sum_{i=1}^d \frac{1}{l_i^2 }  
  \leq \lambda\biggl(1 + \frac{2ds}{l_1 \sqrt{\lambda}} + \frac{ds^2}{l_1^2 \lambda}\biggr)\, .
\end{align*}
Consequently, 
\begin{equation*}
  N_{s, R}(\lambda) \leq N_0 \biggl(\lambda\Bigl(1 + \frac{2ds}{l_1 \sqrt{\lambda}} + \frac{ds^2}{l_1^2 \lambda}\Bigr)\biggr) = N\biggl(-\Delta_R^{\rm D}, \lambda\biggl(1 + \frac{2ds}{l_1 \sqrt{\lambda}} + \frac{ds^2}{l_1^2 \lambda}\biggr)\biggr)\, .
\end{equation*}
We aim to use Lemma~\ref{lem: gittinsLarsonlemma} to show that, for $s$ sufficiently small, the increase in spectral cut-off can be compensated for by the negative correction in this bound. 

Lemma \ref{lem: gittinsLarsonlemma} yields that
\begin{equation*}
   N\biggl(-\Delta_R^{\rm D}, \lambda\biggl(1 + \frac{2ds}{l_1 \sqrt{\lambda}} + \frac{ds^2}{l_1^2 \lambda}\biggr)\biggr) \leq L_{0, d}^{\mathrm sc} |R| \rho\biggl(\lambda\biggl(1 + \frac{2ds}{l_1 \sqrt{\lambda}} + \frac{ds^2}{l_1^2 \lambda}\biggr) \biggr)  \, 
\end{equation*}
for all $b \in [0, b_0]$ and $\lambda >0$ and
where 
\begin{equation*}
  \rho(\mu) = \mu^{d/2} \biggl( 1 - \frac{A_d b}{l_1\sqrt{\mu}} + \frac{B_d b^2}{l_1^2\mu} \biggr)\, .
\end{equation*}
It remains to choose $b$ and $s$, independently of $l_1$ and $\lambda$, so that 
\begin{equation*}
  \rho\biggl(\lambda\biggl(1 + \frac{2ds}{l_1 \sqrt{\lambda}} + \frac{ds^2}{l_1^2 \lambda}\biggr) \biggr)<\lambda^{d/2}
\end{equation*}
whenever $\lambda \geq \pi^2 / (4l_1^2)$. To this end, set $x=1/(l_1 \sqrt{\lambda})$. Then
\begin{equation*}
  \lambda\biggl(1 + \frac{2ds}{l_1 \sqrt{\lambda}} + \frac{ds^2}{l_1^2 \lambda}\biggr) = \lambda (1 +  2sdx + s^2 d x^2) = \lambda(1+\delta)
\end{equation*}
where $\delta=\delta(s, x) = 2sdx + s^2 d x^2$. We have
\begin{align*}
  \rho\biggl(\!\lambda\biggl(1 + \frac{2ds}{l_1 \sqrt{\lambda}} + \frac{ds^2}{l_1^2 \lambda}\biggr)\!\biggr) = \lambda^{d/2} \biggl( (1 + \delta )^{d/2} - bA_d x ( 1 + \delta )^{(d-1)/2} + b^2B_d x^2 (1 +\delta )^{(d-2)/2} \biggr) \, .
\end{align*}
Since $\lambda \geq \pi^2 / (4l_1^2)$, we have $0<x \leq 2/\pi$ and therefore 
\begin{align*}
  \delta \leq \frac{4sd}{\pi} + \frac{4 s^2 d}{\pi^2}\, .
\end{align*}
Hence, by assuming that $s$ is sufficiently small we can ensure that $\delta \leq 1$ for any $x \leq 2/\pi$. By Taylor's theorem, there exists a constant $C_d$ only depending on $d$ such that for any $0\leq \delta \leq 1$, we have
\begin{align*}
  (1+\delta)^{d/2} \leq 1 + \frac{d}{2} \delta + C_d \delta^2.
\end{align*}
Then, using $(1+\delta)^{(d-1)/2} \geq 1$ and $(1+\delta)^{(d-2)/2} \leq 2^{(d-2)/2}$, we obtain the upper bound
\begin{align*}
  \rho\biggl(\lambda\biggl(1 + \frac{2ds}{l_1 \sqrt{\lambda}} + \frac{ds^2}{l_1^2 \lambda}\biggr) \biggr) \leq \lambda^{d/2} \biggl( 1 + \frac{d}{2} \delta + C_d \delta^2 - bA_d x + 2^{(d-2)/2} b^2B_d x^2 \biggr) .
\end{align*}
Assuming that $s \leq 1$ the fact that $x\leq 2/\pi < 1$ implies
\begin{align*}
  \delta &= 2sdx + s^2 d x^2 \leq 4sdx\, .
\end{align*}
Consequently, 
\begin{equation*}
  \rho\biggl(\lambda\biggl(1 + \frac{2ds}{l_1 \sqrt{\lambda}} + \frac{ds^2}{l_1^2 \lambda}\biggr) \biggr) \leq \lambda^{d/2} \bigl( 1 + (2d^2 s - bA_d) x + ( 16 C_d d^2 s^2  + 2^{(d-2)/2} B_d b^2 ) x^2 \bigr) \, .
\end{equation*}

We may now choose 
\begin{equation*}
  b \leq \frac{A_d}{4} ( d^{-2} C_d A_d^2  + 2^{(d-2)/2} B_d )^{-1}
\end{equation*}
so small that the choice $s = b A_d/( 4d^2)$ satisfies the previous requirements. For any such choice we conclude that
\begin{equation*}
  \bigl( 1 + (2d^2 s - bA_d) x + ( 16 C_d d^2 s^2  + 2^{(d-2)/2} B_d b^2 ) x^2 \bigr) \leq \Bigl( 1 - \frac{A_d}{4} b x \Bigr) < 1
\end{equation*}
for any $0<x \leq 2/\pi<1$ and therefore
\begin{equation*}
  \rho\biggl(\lambda\biggl(1 + \frac{2ds}{l_1 \sqrt{\lambda}} + \frac{ds^2}{l_1^2 \lambda}\biggr) \biggr)< \lambda^{d/2}
\end{equation*}
for any $\lambda \geq \pi^2 / (4l_1^2)$. 

To show the validity of \eqref{eq: shifted lattice bound nonstrict} at $s=s(d)$ we argue as follows. Assume, for contradiction, that there exists a cuboid $R$ and $\lambda>0$ such that
\begin{equation*}
  N_{s(d), R}(\lambda) > L_{0, d}^{\rm sc}|R|\lambda^{d/2}\, .
\end{equation*}
Choose $t>1$ such that
\begin{equation*}
  N_{s(d), R}(\lambda) > L_{0, d}^{\rm sc}|tR|\lambda^{d/2}= L_{0, d}^{\rm sc}|R|t^d\lambda^{d/2}\, .
\end{equation*}
Since $Q_{s(d), tR}(k) = t^{-2}Q_{s(d), R}(k)$, every lattice point that is counted by $N_{s(d), R}(\lambda)$ satisfies $Q_{s(d), tR}(k)<\lambda$. Since there are only finitely many such lattice points and the functions $s \mapsto Q_{s, tR}(k)$ are continuous, there exists $s<s(d)$, sufficiently close to $s(d)$, such that all of these lattice points are counted also by $N_{s, tR}(\lambda)$. Thus
\begin{equation*}
  N_{s, tR}(\lambda) >L_{0, d}^{\rm sc}|tR|\lambda^{d/2}\, , 
\end{equation*}
contradicting the definition of $s(d)$. Therefore, \eqref{eq: shifted lattice bound nonstrict} holds also for $s=s(d)$.

It remains to prove that the inequality is strict when $s<s(d)$. Aiming for contradiction, suppose that equality in \eqref{eq: shifted lattice bound nonstrict} holds for some $s<s(d)$, $\lambda>0$, and $R$. Choose $s' \in (s, s(d))$. Since $Q_{s', R}(k)<Q_{s, R}(k)$ for every $k \in \N^d$, each lattice point counted by $N_{s, R}(\lambda)$ satisfies $Q_{s', R}(k)<\lambda$. Hence, for $t<1$ sufficiently close to $1$, the identity $Q_{s', tR}(k)=t^{-2}Q_{s', R}(k)$ implies that these points are still counted by $N_{s', tR}(\lambda)$. We thus conclude that
\begin{equation*}
  N_{s', tR}(\lambda) \geq N_{s, R}(\lambda) = L_{0, d}^{\rm sc}|R|\lambda^{d/2} > L_{0, d}^{\rm sc}|tR|\lambda^{d/2}\, .
\end{equation*}
Since $t<1$ and $s'<s(d)$, this contradicts the choice of $s(d)$. Therefore, we conclude that \eqref{eq: shifted lattice bound nonstrict} is strict when $s<s(d)$.
\end{proof}

\begin{proof}[Proof of Theorem \ref{thm: sc_inequality}]
  Propositions \ref{prop: NRobin < NShifted} and \ref{prop: shifted_lattice_bound} imply that the claimed inequality holds for all sufficiently large $\beta$. Moreover, by the variational principle $\beta \mapsto N(-\Delta_R^{\beta \sqrt{\lambda}}, \lambda)$ is non-increasing for every fixed $R$ and $\lambda$. Hence the set of $\beta>0$ for which the claimed inequality holds for all cuboids $R$ and $\lambda> 0$ is an interval unbounded from above. We define $\beta(d)$ to be its left endpoint (infimum). 
  
  To see that $\beta(d)>0$, fix a cuboid $R$ and choose $\lambda >0$ which is not an eigenvalue of $-\Delta_R^{\rm N}$. Then 
  $$
    \lim_{\beta \to 0} N(-\Delta_R^{\beta \sqrt{\lambda}}, \lambda) = N(-\Delta_R^{\rm N}, \lambda)\, .
  $$ 
  By \cite[Lemma 2.2]{Gittins2017}, we have $N(-\Delta_R^{\rm N}, \lambda)>L_{0, d}^{\rm sc}|R|\lambda^{d/2}$ and thus the desired inequality fails for all sufficiently small $\beta >0$.

  We next prove the validity of the bound for $\beta = \beta(d)$. Assume, for contradiction, that there are $R$ and $\lambda>0$ such that
  \begin{equation}\label{eq: assumed failure at critical}
    N(-\Delta_R^{\beta(d) \sqrt{\lambda}}, \lambda) > L_{0, d}^{\rm sc}|R|\lambda^{d/2}\, .
  \end{equation}
  Let $n = N(-\Delta_R^{\beta(d) \sqrt{\lambda}}, \lambda).$ By \eqref{eq: assumed failure at critical}, we can choose $t>1$ so close to $1$ that
  $$
     n> L_{0, d}^{\rm sc}|tR|\lambda^{d/2}\, .
  $$
  Define the natural isometry between $u \in H^1(tR)$ and $\tilde u \in H^1(R)$ by $u(x) = t^{-d/2}\tilde u(x/t)$ and note that
  \begin{align*}
    &\frac{\int_{tR}|\nabla u(x)|^2\, dx + \beta(d)\sqrt{\lambda} \int_{\partial (tR)}|u(x)|^2 \, d\Haus^{d-1}(x)}{\|u\|_{L^2(tR)}^2}\\
    &\qquad\qquad=
    \frac{\int_{R}|\nabla \tilde u(x)|^2\, dx + \beta(d)\sqrt{\lambda} \int_{\partial R}|\tilde u(x)|^2 \, d\Haus^{d-1}(x)}{\|\tilde u\|_{L^2(R)}^2}\\
    &\qquad\qquad\quad 
    +(t^{-2}-1)\frac{\int_{R}|\nabla \tilde u(x)|^2\, dx}{\|\tilde u\|_{L^2(R)}^2} + (t^{-1}-1)\frac{\beta(d)\sqrt{\lambda} \int_{\partial R}|\tilde u(x)|^2 \, d\Haus^{d-1}(x)}{\|\tilde u\|_{L^2(R)}^2}\, .
  \end{align*}
  For $t>1$ the sum of the last two terms in the above identity is negative for any non-trivial $\tilde u$. Therefore, the variational principle implies that for our chosen $t$ that $\lambda_n(-\Delta_{tR}^{\beta(d) \sqrt{\lambda}})<\lambda$. In particular, the continuity of the functions $\beta \mapsto \lambda_n(-\Delta_{tR}^{\beta\sqrt{\lambda}})$ allows us to conclude that there is a $\beta>\beta(d)$ for which we still have $\lambda_n(-\Delta_{tR}^{\beta\sqrt{\lambda}})<\lambda$. Thus, we have shown that there exists $t>1$ and $\beta >\beta(d)$ so that
  \begin{equation*}
    N(-\Delta_{tR}^{\beta \sqrt{\lambda}}, \lambda) \geq n > L_{0, d}^{\rm sc}|tR|\lambda^{d/2}\, ,
  \end{equation*}
  which contradicts the definition of $\beta(d)$, and therefore proves that the claimed inequality holds at $\beta = \beta(d)$.

  It remains to prove strictness when $\beta >\beta(d)$. Fix a cuboid $R$ and $\lambda>0$. If 
  $$
    N(-\Delta_R^{\beta(d)\sqrt{\lambda}}, \lambda)<L_{0, d}^{\rm sc}|R|\lambda^{d/2}
  $$ 
  then the monotonicity in $\beta$ implies that the inequality remains strict for all $\beta >\beta(d)$. 
  
  Suppose instead that 
  \begin{equation}\label{eq: equality case}
    N(-\Delta_R^{\beta(d)\sqrt{\lambda}}, \lambda)=L_{0, d}^{\rm sc}|R|\lambda^{d/2}\, .
  \end{equation}
  We first claim that $\lambda = \lambda_k(-\Delta_R^{\beta(d)\sqrt{\lambda}})$ with $k = L_{0, d}^{\rm sc}|R|\lambda^{d/2}\in \N$. Otherwise, by the continuity in $\mu$ of the eigenvalues of $-\Delta_R^{\beta(d)\sqrt{\mu}}$, one could choose $\mu<\lambda$ sufficiently close to $\lambda$ so that 
  $$
    N(-\Delta_R^{\beta(d)\sqrt{\mu}}, \mu) = N(-\Delta_R^{\beta(d)\sqrt{\lambda}}, \lambda)\, .
  $$ 
  Combining the P\'olya-type inequality at $\beta(d)$ with \eqref{eq: equality case} then yields 
  $$
    L_{0, d}^{\rm sc}|R|\lambda^{d/2}= N(-\Delta_R^{\beta(d)\sqrt{\lambda}}, \lambda)=N(-\Delta_R^{\beta(d)\sqrt{\mu}}, \mu) \leq L_{0, d}^{\rm sc}|R|\mu^{d/2}
  $$
  which is a contradiction as $\mu <\lambda$. This proves the claim that $\lambda = \lambda_k(-\Delta_R^{\beta(d)\sqrt{\lambda}})$ with $k = L_{0, d}^{\rm sc}|R|\lambda^{d/2}$. With this claim in hand the strict monotonicity of $\beta \mapsto \lambda_k(-\Delta_R^{\beta \sqrt{\lambda}})$ implies that $N(-\Delta_R^{\beta\sqrt{\lambda}}, \lambda) <N(-\Delta_R^{\beta(d)\sqrt{\lambda}}, \lambda) = L_{0, d}^{\rm sc}|R|\lambda^{d/2}$ if $\beta > \beta(d)$.  
\end{proof}


\def\myarXiv#1#2{\href{http://arxiv.org/abs/#1}{\texttt{arXiv:#1\, [#2]}}}

\end{document}